\newtheorem{theorem}{Theorem}
\newtheorem{lemma}{Lemma}
\newtheorem{remark}{Remark}
\newcommand{\R}{\mathbb{R}}
\newcommand{\C}{\mathbb{C}}
\newcommand{\N}{\mathbb{N}}
\newcommand{\Q}{\mathbb{Q}}
\newcommand{\Exp}{\textnormal{Exp}}
\newcommand{\Bo}{{\cal B}}
\newcommand{\TBo}{{\tilde{\cal B}}}
\newcommand{\HC}{{\cal HC}}
\newcommand{\FHC}{{\cal FHC}}
\newcommand{\Imm}{\textnormal{Im}}
\newcommand{\Reee}{\textnormal{Re}}
\newcommand{\ldens}{\underline{\textnormal{dens}}}
\newcommand{\linspan}{\mathrm{linspan}}
\title{\bf \center{Growth of frequently Birkhoff-universal functions of exponential type on rays} }
\author{Hans-Peter Beise}
\begin{document}
\maketitle

\begin{abstract}
We consider growth conditions for (frequently) Birkhoff-universal functions of exponential type with respect to the different rays emanating from the origin. For that purpose, we investigate their (conjugate) indicator diagram or, equivalently, their indicator function. Some known results, where growth is measured with respect to the maximum modulus, are extended. 
\end{abstract}
\emph{Key words and phrases}: frequently hypercyclic operators, growth conditions, functions of exponential type\\
\emph{Mathematics Subject Classification}: 30K99, 30D15

\section{Introduction and main Results}
Let $X$ be a topological vector space and $T$ a continuous operator on $X$. A vector $x\in X$ is called a\textbf{ hypercyclic vector} (for  $T$)
if its orbit $\{T^nx:n\in\N\}$ is dense in $X$. We denote by $\HC(T,X)$ the set of all hypercyclic vectors for $T$ on $X$. The operator $T$ is called \textbf{hypercyclic} (on $X$) if $\HC(T,X)\neq\emptyset$. In case that $X$ is an F-space, it is well-known that $\HC(T,X)$ is either empty or a dense $G_\delta$-set. In \cite{frequent}, F. Bayart and S. Grivaux introduced a stronger form of hypercyclicity, namely, the frequent hypercyclicity, as they impose a condition on the recurrence of the orbit of a single vector to each non-empty open set. A vector $x\in X$ is called a \textbf{frequently hypercyclic vector} (for $T$) if for each non-empty open set $U\subset X$ the set $\{n:T^nx\in U\}$ has positive lower density. By $\FHC(T,X)$ we denote the set of all frequently hypercyclic vectors for $T$ on $X$. The operator $T$ is said to be \textbf{frequently hypercyclic} if $\FHC(T,X)\neq\emptyset$. We recall that the \textbf{lower density} of a discrete set $\Lambda\subset \C$ is defined by
\[
 \liminf\limits_{r\rightarrow \infty}\frac{\#\{\lambda\in\Lambda:|\lambda|\leq r\}}{r}=:\ldens(\Lambda).
\]
Hypercyclicity is part of a more general concept, namely, universality. Here, a sequence of mappings $T_n:X\rightarrow Y$ ($n\in\N$) is considered where $X$ is some set and $Y$ is some topological space. The sequence $(T_n)$ is said to be \textbf{universal} if there exists some $x\in X$ such that $\{T_n x:n\in \N\}$ is dense in $Y$ and $x$ is called a \textbf{universal vector} in this case. In \cite{freqerd}, frequent hypercyclicity is generalised in the same way. The sequence $T_n:X\rightarrow Y$ ($n\in\N$) is said to be \textbf{frequently universal} if there exists some $x\in X$ such that for every non-empty open set $U\subset Y$ the set $\{n:T_nx\in U\}$ has positive lower density. \\

In this work, we are only concerned with spaces consisting of entire functions and with the translation operator $T_1$ that maps an entire function $f$ to $f(\cdot+1)$. A classical result in universality due to G. D. Birkhoff (cf. \cite{birkhoff}) states that $\HC(T_1,H(\C))\neq\emptyset$, where $H(\C)$ denotes the space of all entire functions endowed with the topology of uniform convergence on compact sets. It is also known that $\FHC(T_1,H(\C))\neq\emptyset$ (cf. \cite{frequent}). The elements in $\HC(T_1,H(\C))$ and $\FHC(T_1,H(\C))$ are called \textbf{Birkhoff-universal functions} and \textbf{frequently Birkhoff-universal functions}, respectively.\\
In \cite{duyos}, S. M. Duyos-Ruiz proves that Birkhoff-universal functions can have arbitrary slow transcendental rate of growth. Using the notation $M_f(r):=\max_{|z|=r}|f(z)|$, this means that for every $q:[0,\infty)\rightarrow [1,\infty)$ such that $q(r)\rightarrow \infty$, as $r$ tends to infinity, there are functions $f\in \HC(T_1,H(\C))$ such that $M_f(r)=O(r^{q(r)})$. This result is not valid for the case of frequent hypercyclicity. This is a consequence of results in \cite{frequenterd} that imply the following: There is no $f\in \FHC(T_1,H(\C))$ such that $M_f(r)=O(e^{\varepsilon r})$ for all $\varepsilon>0$. Conversely, for every fixed $\tau>0$, there exist functions $f\in \FHC(T_1,H(\C))$ such that $M_f(r)=O(e^{\tau r})$. In other words, there are no frequently Birkhoff-universal functions of exponential type zero, while for every positve $\tau>0$, there are frequently Birkhoff-universal functions of exponential type less or equal than $\tau$. 
Recall that an entire function $f$ is called a function of \textbf{exponential type $\tau$} if
\[
 \limsup\limits_{r\rightarrow\infty}\frac{\log M_f(r)}{r}=\tau
\]
and, $f$ is said to be a function of exponential type when the above $\tau$ is finite.\\
We extend known results by giving growth conditions for frequently Birkhoff-universal functions with respect to rays in the complex plane.\\
For an entire function $f$ of exponential type, the \textbf{indicator function} is defined by
\[
 h_f(\Theta):=\limsup\limits_{r\rightarrow\infty} \frac{\log |f(re^{i\Theta})|}{r},\ \ \Theta\in[-\pi,\pi].
\]
It is known that $h_f$ is determined by the support function of a certain compact and convex set $K(f)\subset\C$, more precisely, for $z=re^{i\Theta}$ we have
\begin{align}\label{indicator}
 r\, h_f(\Theta)=H_{K(f)}(z):=\sup\limits\{\Reee(zu):u\in K(f)\}
\end{align}
(cf. \cite[Theorem 1.3.21]{berenstein}). The set $K(f)$ is called the \textbf{conjugate indicator diagram} of $f$. For a given compact, convex set $K\subset \C$,
$\Exp(K)$ denotes the space of all entire functions $f$ of exponential type such that $K(f)\subset K$. The space $\Exp(K)$ is endowed with the topology induced by the norms
\[
 ||f||_{K,n}:=\sup\limits_{z\in\C}|f(z)|\,e^{-H_{K}(z)-\frac{1}{n}|z|},\ \ n\in\N
\]
which makes it to a Fr\'{e}chet space (cf. \cite{berenstein}, \cite{morimoto}). \\
According to the above, one observes that the functions $f\in \FHC(T_1,\Exp(K))$ are frequently Birkhoff-universal functions with restricted indicator function and thus satisfy certain growth conditions, specified by $K$, on the particular rays emanating from the origin.\\
We abbreviate $z\mapsto e^{\alpha z}$ by $e_\alpha$, where $\alpha$ is some complex number. From \cite[Theorem 5.4.12]{boas} we can conclude that for each function $f$ of exponential type, we have 
\begin{align}\label{ealphaind}
K(f e_\alpha)=\alpha+K(f)
\end{align}
since $K(e_\alpha)$ equals the singleton $\{\alpha\}$. In particular, for every polynomial $P$ we have $K(Pe_\alpha)=\{\alpha\}$.
Consequently, for a given non-empty, convex and compact set $K\subset\C$ and $\alpha\in K$ we have $\{Pe_\alpha:P\textnormal{ polynomial }\}\subset \Exp(K)$. By means of Runge's theorem, it is easily seen that $\{Pe_\alpha:P\textnormal{ polynomial }\}$ is dense in $H(\C)$ for each $\alpha\in \C$. This implies that $\Exp(K)$ is continuously and densely embedded in $H(\C)$ for every non-empty, compact and convex $K\subset\C$, and hence $\FHC(T_1,\Exp(K))\subset \FHC(T_1,H(\C))$. \\
For $v,w\in\C$ we denote by $[v,w]$ the closed line segment that connects $v$ and $w$.

\begin{theorem}\label{satzeins}
Let $v,w$ be complex numbers and $K:=[v,w]$. Then the set $\FHC(T_1,\Exp(K))$ is nonempty if and only if $v\neq w$ and $v,w\in i\R$, that is, $K$ is a non-singleton line segment of the imaginary axis.
\end{theorem}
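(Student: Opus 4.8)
The plan is to prove the two implications by quite different means: the ``if'' direction by producing a frequently hypercyclic vector from the unimodular eigenvectors of $T_1$ that survive in $\Exp(K)$, and the ``only if'' direction by a growth estimate along the two real rays $\Theta=0,\pi$. For sufficiency, write $K=[ia,ib]$ with $a<b$. First I would record that $T_1$ maps $\Exp(K)$ continuously into itself: translation does not move the conjugate indicator diagram, so $K(T_1 f)=K(f)$ and $T_1f\in\Exp(K)$, while subadditivity of the support function, $H_K(\zeta)\le H_K(\zeta-1)+H_K(1)$, gives $\|T_1f\|_{K,n}\le e^{H_K(1)+1/n}\|f\|_{K,n}$. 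The decisive observation is that for every $\theta\in[a,b]$ the exponential $e_{i\theta}$ lies in $\Exp(K)$, since $K(e_{i\theta})=\{i\theta\}\subset K$ by \eqref{ealphaind}, and is an eigenvector of $T_1$ with unimodular eigenvalue $e^{i\theta}$. Thus $T_1$ carries a continuum of unimodular eigenvectors parametrised continuously over the nondegenerate interval $[a,b]$, and sufficiency reduces to verifying the Bayart--Grivaux criterion for a perfectly spanning field of unimodular eigenvectors (cf. \cite{frequent}).

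The one nontrivial point for sufficiency is the spanning condition. Passing to the Borel transform, which identifies $\Exp(K)$ with the germs of holomorphic functions on $\C_\infty\setminus K$ vanishing at infinity, the eigenvector $e_{i\theta}$ becomes the Cauchy kernel $1/(w-i\theta)$. Difference quotients in the parameter $\theta$ (available precisely because $[a,b]$ is a genuine interval) place all higher-order poles $1/(w-i\theta)^k$ in the closed linear span, hence every rational function with poles on $K$; as $\C_\infty\setminus K$ is connected, Runge's theorem makes these dense. The same argument runs verbatim with $[a,b]$ replaced by any subset of full Lebesgue measure, which is exactly the perfect-spanning requirement; this is where $v\ne w$ enters, a single point yielding only one eigenvalue. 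The criterion then delivers $\FHC(T_1,\Exp(K))\ne\emptyset$.

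For necessity I would argue contrapositively from an $f\in\FHC(T_1,\Exp(K))\subset\FHC(T_1,H(\C))$. Since $K(f)$ is a compact convex subset of the segment $[v,w]$ it is itself a (possibly degenerate) subsegment, and it suffices to show that $K(f)$ is a nondegenerate segment of $i\R$: then $[v,w]$ contains two distinct points of $i\R$, forcing the line through $v,w$ to be the imaginary axis, whence $v,w\in i\R$ and $v\ne w$. The heart of the matter, and the step I expect to be the main obstacle, is the ray-growth bound $h_f(0)\le 0$ and $h_f(\pi)\le 0$, which says $\Reee u=0$ for all $u\in K(f)$, i.e.\ $K(f)\subset i\R$. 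This is the directional refinement of the maximum-modulus lower bound of \cite{frequenterd}: a part of $K(f)$ off the imaginary axis corresponds under $T_1$ to an eigendirection whose eigenvalue has modulus $\ne 1$, growing or decaying geometrically under iteration, which ought to be incompatible with the positive lower density of returns of the orbit $\{f(\cdot+n)\}$ both to a neighbourhood of $0$ and to arbitrarily large targets. I would make this quantitative by feeding the positive-density approximation sets provided by frequent hypercyclicity (uniform smallness, respectively prescribed largeness, of $f$ on a positive-density family of unit disks centred on the real axis) into a Phragm\'en--Lindel\"of estimate for the indicator in the right, respectively left, half-plane.

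Once $K(f)\subset i\R$ is secured, nondegeneracy is cheap. If $K(f)=\{i\theta_0\}$ were a single point, then $e_{-i\theta_0}f$ would have conjugate indicator diagram $\{0\}$ by \eqref{ealphaind}, hence be of exponential type zero; and from $e_{-i\theta_0}(z)\,T_1(e_{i\theta_0}g)(z)=e^{i\theta_0}(T_1g)(z)$ one sees that multiplication by $e_{-i\theta_0}$ conjugates $T_1$ to the unimodular multiple $e^{i\theta_0}T_1$, so that $e_{-i\theta_0}f$ is frequently hypercyclic for $e^{i\theta_0}T_1$. As the obstruction of \cite{frequenterd} to type-zero frequently hypercyclic functions depends only on the moduli $|(e^{i\theta_0}T_1)^n g|=|g(\cdot+n)|$, it transfers verbatim to $e^{i\theta_0}T_1$, a contradiction. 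Hence $K(f)$ is a nondegenerate segment contained in $i\R$, which completes the necessity direction.
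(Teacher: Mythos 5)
Your sufficiency argument is a genuinely different route from the paper's: the paper does not invoke the Bayart--Grivaux perfectly-spanning eigenvector criterion at all, but instead verifies the Bonilla--Grosse-Erdmann Frequent Universality Criterion (Theorem \ref{freqcriterd}) for the dense set $\linspan\{(e^{\alpha z}-1)^2/z^2:\alpha\in[-id/2,id/2]\}$ (density coming from Lemma \ref{density}, a dual/Borel-transform argument close in spirit to your Runge step). The choice of $(e^{\alpha z}-1)^2/z^2$ rather than the pure eigenvectors $e_{i\theta}$ is not cosmetic: $\|S_n e_{i\theta}\|$ does not decay (the modulus of $e_{i\theta}$ is constant on $\R$), so the summability conditions (1)--(3) of that criterion fail for exponentials, whereas the $1/z^2$ factor yields the needed $O(1/k^2)$ decay. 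Your route therefore stands or falls with the availability of the unimodular-eigenvector criterion on the non-normable Fr\'echet space $\Exp(K)\cong H_0(\C\setminus K)$; this is a citable but nontrivial point (the classical statement is for Banach spaces) that you would have to justify. If it is available, your argument is a legitimate and arguably slicker alternative.

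The necessity direction, however, has a genuine gap exactly where you flag ``the main obstacle.'' You propose to show that every $f\in\FHC(T_1,H(\C))$ of exponential type satisfies $h_f(0)\le 0$ and $h_f(\pi)\le 0$, i.e.\ $K(f)\subset i\R$, by feeding positive-density smallness/largeness sets into a Phragm\'en--Lindel\"of estimate. This is substantially stronger than anything the paper proves, and the proposed mechanism does not work: frequent hypercyclicity controls $f$ only on a family of unit disks of positive \emph{lower} density, whose complement still contains arbitrarily long intervals of $\R_+$ on which $|f|$ may grow like $e^{\sigma x}$; since $h_f(0)$ is a $\limsup$, it is simply not bounded by such positive-density information (compare $\sin(\pi z)e^{z}$, which vanishes at every integer yet has $h_f(0)=1$). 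Tellingly, Remark \ref{bemerkungHC} explicitly leaves open whether $\HC(T_1,H(\C))\cap\Exp(K)$ can be nonempty for $K$ in the open right half-plane, and Theorem \ref{einpunktig} needs extra hypotheses even in the left half-plane. The paper's actual necessity argument avoids your claim entirely by a case split: (i) for $K=[v,w]$ non-vertical or a singleton, Rouch\'e produces a positive-lower-density sequence of zeros near the integers, and Carleman's formula (Lemma \ref{nullenlemma}) bounds the lower density of zeros in a sector by the \emph{vertical extent} of $K(f)$, which is zero after a rotation --- contradiction (Theorem \ref{horizontal}); (ii) for $K$ a vertical segment strictly inside a half-plane, one works in $\Exp(K)$ (not $H(\C)$), where $f$ or its backward orbit tends to $0$ along a real ray and the equivalence $\HC(T)\neq\emptyset\iff\HC(T^{-1})\neq\emptyset$ finishes the job. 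You should adopt some version of this case analysis (in particular, the Carleman zero-counting input) rather than the indicator bound, which I do not believe can be established by the method you describe. Your nondegeneracy step for singletons (conjugating to $e^{i\theta_0}T_1$ and transferring the exponential-type-zero obstruction of \cite{frequenterd}) is plausible but rests on re-examining the proof of a cited theorem; the paper gets the singleton case for free from the same Carleman lemma.
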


In the following section, we discuss to what extent the necessary condition in this result extends to the frequent hypercyclicity in the weaker topology of $H(\C)$. Here, also more general sets $K$ are considered. 
\begin{remark}\label{bemerkeins}
If $K$ is a non-singleton, closed line segment of the imaginary axis, say $K:=[-ia,ia]+ib$ with real numbers $a,b$, where $a>0$, we have
\begin{align*}
h(\Theta):=H_K(e^{i\Theta}) =-b\sin(\Theta)+a\,|\sin(\Theta)|,\ \Theta\in[-\pi,\pi].
\end{align*}
Now, considering (\ref{indicator}), Theorem \ref{satzeins} implies the existence of functions \\$f\in\FHC(T_1,H(\C))$ of exponential type such that 
$h_f\leq h$.
\end{remark}
In the next two results we show that functions that belong to $\HC(T_1,\Exp(K))$ and $\FHC(T_1,\Exp(K))$ can have a very small rate of growth on the real axis provided $K$ contains two distinct points of the imaginary axis.
\begin{theorem}\label{reelklein}
Let $q:[0,\infty)\rightarrow [1,\infty)$ be such that $q(r)\rightarrow \infty$ as $r$ tends to infinity and $K\subset\C$ a compact, convex set containing two distinct points of the imaginary axis. Then there is a function $f\in\HC(T_1,\Exp(K))$ that satisfies
\[
|f(x)|=O(q(|x|))
\]
on the real axis.
\end{theorem}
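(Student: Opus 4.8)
The plan is to construct the required $f$ explicitly as an infinite series, with the translation amounts and the sharpness of certain windowing factors fixed by a greedy inductive procedure. The whole construction hinges on one structural feature supplied by the hypothesis: the segment on the imaginary axis furnishes exponentials that are bounded on $\R$.

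First I record the set-up and a density lemma. Since $K$ is convex and contains two distinct imaginary points $ic,id$ with $c<d$, it contains the full segment $[ic,id]$. I fix a proper subsegment $[ic',id']\subset(ic,id)$ and set $\epsilon:=\min\{c'-c,\,d-d'\}>0$; reserving this margin is essential. The first step is the claim that $\linspan\{e_{it}:t\in[c',d']\}$ is dense in $\Exp(K)$. This follows from Fourier--Borel duality (cf. \cite{berenstein},\cite{morimoto}): a continuous functional $\Phi$ on $\Exp(K)$ is represented by a holomorphic germ $\psi$ on a neighbourhood of $K$ with $\Phi(e_\alpha)=\psi(\alpha)$; if $\Phi$ annihilates all $e_{it}$, $t\in[c',d']$, then $\psi$ vanishes on a segment with accumulation points, hence $\psi\equiv 0$, so $\Phi=0$ because $\{e_\alpha:\alpha\in K\}$ is total. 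Thus I may fix a countable dense set $\{g_j\}$ in $\Exp(K)$ with each $g_j=\sum_l c_{j,l}\,e_{it_{j,l}}$, $t_{j,l}\in[c',d']$; crucially $|g_j(x)|\le B_j:=\sum_l|c_{j,l}|$ for all real $x$, so every target is bounded on the real axis.

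For the building blocks I use windows that are $\approx 1$ near the origin, decay fast along $\R$, and have an arbitrarily short imaginary conjugate indicator diagram. A power of a sinc, $W(z)=\bigl(\sin(\beta z)/(\beta z)\bigr)^{N}$, has $K(W)=[-iN\beta,iN\beta]$, decays like $|z|^{-N}$ on $\R$, and satisfies $W\approx 1$ for $|z|\lesssim 1/\beta$; taking $\beta=\eta/N$ gives $K(W)=[-i\eta,i\eta]$ with a main lobe of radius $\sim N/\eta$. I enumerate all approximation tasks $(j_k,n_k,\varepsilon_k)$ (dense targets, seminorm indices, rational tolerances, each recurring infinitely often) and set
\[
u_k(z):=W_k(z-m_k)\,g_{j_k}(z-m_k),\qquad f:=\sum_{k\ge 1}u_k ,
\]
with integers $m_1<m_2<\cdots$ and window data $\eta_k<\epsilon$, $N_k$ still to be chosen. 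Using $K(fg)\subseteq K(f)+K(g)$ and translation invariance of the conjugate indicator diagram, $K(u_k)\subseteq[i(c'-\eta_k),\,i(d'+\eta_k)]\subseteq[ic,id]\subseteq K$, so each $u_k\in\Exp(K)$; the inclusion is strict in the imaginary direction, which is precisely the margin I reserved.

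The three estimates to arrange inductively are, writing $f(\cdot+m_k)-g_{j_k}=(W_k-1)g_{j_k}+\sum_{l\ne k}u_l(\cdot+m_k)$: in $\|\cdot\|_{K,n_k}$, (i) $\|(W_k-1)g_{j_k}\|_{K,n_k}<\varepsilon_k/2$ by taking $N_k$ large so that $W_k\approx 1$ on a large disc; (ii) $\sum_{l\ne k}\|u_l(\cdot+m_k)\|_{K,n_k}<\varepsilon_k/2$ from the spacing, since for $l\ne k$ the window $W_l$ is then evaluated far from its lobe on the region where the weight is not small; and (iii) $\sum_k\|u_k\|_{K,n}<\infty$ for every fixed $n$, giving $f\in\Exp(K)$. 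For (iii) the point is that $K\subseteq i\R$ makes the weight invariant under real translation, $H_K(w+m_k)=H_K(w)$, so that $|u_k(w)|e^{-H_K(w)}$ decays in $|\Imm(w)|$ at rate $\ge\epsilon-\eta_k>0$, while the factor $e^{-|z|/n}$ localizes the relevant $z$ near $\Ree(z)=m_k$ and contributes $e^{-m_k/n}$; choosing $m_k\to\infty$ fast enough to beat $B_{j_k}$ and every fixed $n$ makes the series summable in each seminorm. Once (i)--(iii) hold, $\{f(\cdot+m_k)\}$ is dense in $\Exp(K)$, i.e. $f\in\HC(T_1,\Exp(K))$. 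Finally, on the real axis $|f(x)|\le\sum_k|W_k(x-m_k)|\,B_{j_k}$, and since the lobes are widely spaced, near $x\approx m_k$ only the $k$-th term is of order $B_{j_k}$; as I am free to enlarge each $m_k$ I impose $q(m_k)\ge B_{j_k}$ (possible since $q\to\infty$), whence $|f(x)|=O(q(m_k))=O(q(|x|))$ near each lobe and is smaller elsewhere, giving $|f(x)|=O(q(|x|))$ on $\R$. I expect the principal difficulty to be the simultaneous bookkeeping behind (i)--(iii): making a single choice of $(m_k,\eta_k,N_k)$ meet the step-$k$ tolerance while keeping every seminorm of the whole tail summable. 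This is exactly where the margin $\epsilon$ is decisive—without reserving room inside $[ic,id]$, the windows needed to localize on $\R$ would push the conjugate indicator diagram outside $K$, and the argument would break down when $K$ is a segment.
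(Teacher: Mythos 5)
Your construction is correct in outline, but it takes a genuinely different route from the paper. The paper does not prove Theorem \ref{reelklein} directly: it first establishes the general Theorem \ref{reelkleinsatz} via the Frequent Universality Criterion of Bonilla and Grosse-Erdmann, using the blocks $f_\alpha(z)=(e^{\alpha z}-1)^2/z^2$ (bounded on $\R$, decaying like $|x|^{-2}$, with conjugate indicator diagram $[0,2\alpha]$) together with the density Lemma \ref{density}; it then reduces a general $K$ to a segment $[ia,ib]$ of the imaginary axis by showing that $\Exp(K')$ embeds densely into $\Exp(K)$ for $K'\subset K$, and finally transports the resulting function from $[-id,id]$ to $[ia,ib]$ by multiplying with $e_{2\pi i\beta}$ and invoking Ansari's theorem. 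You instead build $f$ by a direct gliding-hump series $\sum_k W_k(\cdot-m_k)\,g_{j_k}(\cdot-m_k)$ with sinc-power windows and finite exponential sums as targets. What the paper's route buys is the stronger conclusion of frequent universality along subsequences $(T_1^{k_l})$, which is reused verbatim for Theorems \ref{frequab} and \ref{satzeins}; what your route buys is a self-contained, elementary argument that avoids the criterion, the dilation lemma and the Ansari step --- but it yields only plain hypercyclicity, so it could not replace the paper's machinery in the frequent case.

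Two points in your sketch need repair, though neither is fatal. First, you justify the summability estimates by saying that ``$K\subseteq i\R$ makes the weight invariant under real translation''; but the hypothesis only places two imaginary points inside $K$, which may well protrude from $i\R$, and then $H_K(w+m_k)\neq H_K(w)$ in general. The fix is that all your blocks lie in $\Exp([ic,id])$ and $\|\cdot\|_{K,n}\leq\|\cdot\|_{[ic,id],n}$ on such functions (since $[ic,id]\subset K$ forces $H_K\geq H_{[ic,id]}$), so every estimate can be run with $H_{[ic,id]}$, which depends only on $\Imm$ and hence is invariant under real shifts. Second, $q$ is not assumed monotone, so the single inequality $q(m_k)\geq B_{j_k}$ does not give $q(|x|)\geq B_{j_k}$ for $x$ ranging over the lobe of $W_k$; you must choose $m_k$ so large that $q(r)\geq B_{j_k}$ for all $r$ in an interval containing the lobe, which is possible because $q(r)\to\infty$. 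This is precisely the role played by condition (\ref{ildotti}) in the paper.
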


\begin{theorem}\label{frequab}
Let $K\subset\C$ be a compact, convex set containing two distinct points of the imaginary axis and $c>1$. Then there is a function $f\in \FHC(T_1,\Exp(K))$ that satisfies
\[
|f(x)|=O(1+|x|^c)
\]
on the real axis.
\end{theorem}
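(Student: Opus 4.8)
The plan is to build $f$ as a superposition $\sum_{k}\sum_{n\in A_k}u_{k,n}$ of localized blocks, each of which forces an approximation of a prescribed target along a positive-density set of translates while remaining polynomially bounded on $\R$. Since $K$ contains two distinct imaginary points it contains a non-degenerate segment $[ia,ib]$ with $a<b$, and $\Exp([ia,ib])\subseteq\Exp(K)$; I would build $f$ entirely out of blocks lying in $\Exp([ia,ib])$. The basic device is the kernel $\kappa_\sigma(z):=\bigl(\sin(\delta z)/(\delta z)\bigr)^{m}$ with $m\delta=\sigma$: its conjugate indicator diagram is exactly $[-i\sigma,i\sigma]$, it satisfies $\kappa_\sigma(0)=1$ with $\kappa_\sigma\to1$ locally uniformly as $\sigma\to0$, and on the real axis $|\kappa_\sigma(x)|\le\min\{1,(\delta|x|)^{-m}\}$ decays polynomially. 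For a target $e_\beta P$ with $\beta\in K$ and $P$ a polynomial, and for $\alpha\in(a,b)$, I take
\[
u_{k,n}(w):=e_{i\alpha}(w-n)\,p_k(w-n)\,\kappa_{\sigma_k}(w-n),
\]
where $p_k$ matches the Taylor jet of $z\mapsto e^{(\beta-i\alpha)z}P(z)$ at the origin to high order. Then $u_{k,n}$ has diagram in $[i(\alpha-\sigma_k),i(\alpha+\sigma_k)]\subseteq[ia,ib]$, so $u_{k,n}\in\Exp(K)$, and on $\R$ it is unimodular-times-polynomially-bounded since $|e_{i\alpha}(x-n)|=1$. Crucially, $T_1^n u_{k,n}=e_{i\alpha}\,p_k\,\kappa_{\sigma_k}$ approximates $e_\beta P$ in every seminorm $\|\cdot\|_{K,N}$ even though the latter grows on $\R$: on bounded sets this is jet matching together with $\kappa_{\sigma_k}\approx1$, while on unbounded sets both terms are separately annihilated by the weight $e^{-H_K(w)-|w|/N}$ because their diagrams lie in $K$. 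As $\{e_\beta P:\beta\in K,\ P\ \text{polynomial}\}$ is total in $\Exp(K)$, such blocks can approximate a countable dense family $(g_k)$.

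Using a standard disjointification I would fix pairwise disjoint $A_k\subseteq\N$ with $\ldens(A_k)>0$ and rapidly decaying densities, translated so that $\min A_k\ge N_k$ for thresholds $N_k$ to be chosen; a constant shift does not affect $\ldens(A_k)$. Writing $C_k$ for the peak of $|u_{k,n}|$ on $\R$ (a finite quantity depending only on $p_k,\sigma_k$) and $L_k\approx1/\delta_k$ for its lobe width, I would choose $N_k$ so large that $C_k\le 2^{-k}N_k^{\,c}$ and $L_k$ is negligible against the gaps of $A_k$, and set $f:=\sum_k\sum_{n\in A_k}u_{k,n}$. That $f\in\Exp(K)$ follows from summability of the seminorms: the threshold placement forces a factor $e^{-n/N}$ in $\|u_{k,n}\|_{K,N}$, so the double series is dominated by $\sum_k C_k\sum_{n\in A_k}e^{-n/N}\lesssim N\sum_k C_k\,e^{-N_k/N}$, which converges for every $N$ because $N_k$ outgrows $\log C_k$.

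The crux, and the step I expect to be the main obstacle, is frequent hypercyclicity, where the interference of all other blocks must be dominated simultaneously with a density lower bound. Fix $k$ and $n\in A_k$; then $T_1^n f=e_{i\alpha}p_k\kappa_{\sigma_k}+\sum_{(j,n')\neq(k,n)}T_1^n u_{j,n'}$, and I must make the error small in $\|\cdot\|_{K,N}$ for a positive-density set of $n$. A narrow block ($L_j$ small) is harmless even when adjacent, since the high power in $\kappa_{\sigma_j}$ makes it tiny already at distance $\ge L_j$; the genuine danger is a wide, high-amplitude block whose centre falls within its own lobe width $L_j$ of $n$, where $\kappa_{\sigma_j}\approx1$ and no decay helps. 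I would control this by a counting argument: the proportion of $n\in A_k$ spoiled by some family $j$ is at most $\sum_j L_j\,\udens(A_j)$, which is forced below $1$ by letting the densities decay fast enough relative to the widths, leaving a positive-density subset $\tilde A_k\subseteq A_k$ on which $T_1^n f\approx g_k$. This yields $f\in\FHC(T_1,\Exp(K))$. I expect the most delicate point to be verifying that the residual error is small in the global weighted seminorms, not merely on a compact disk, since this couples the choices of $\sigma_k$, the thresholds $N_k$, and the density decay.

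Finally, the growth estimate is read off on the real axis, where every exponential factor is unimodular, giving $|f(x)|\le\sum_k\sum_{n\in A_k}|p_k(x-n)|\,|\kappa_{\sigma_k}(x-n)|$. For large $x$ only families with $N_k\le x$ contribute appreciably near $x$, and for those the threshold choice gives $C_k\le 2^{-k}N_k^{\,c}\le 2^{-k}|x|^{c}$, while the polynomial decay of the kernels controls the tails of all remaining blocks; summing over $k$ then produces $|f(x)|=O(1+|x|^{c})$. The hypothesis $c>1$ is what makes the series of accumulated contributions from the simultaneously active families converge to this bound.
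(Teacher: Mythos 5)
Your overall architecture (localized blocks supported on pairwise disjoint positive-density sets $A_k$, with amplitudes tied to thresholds $N_k=\min A_k$) is genuinely different from the paper's route, which verifies the Bonilla--Grosse-Erdmann frequent universality criterion for $T_1$ on $\Exp([-id,id])$ using the blocks $f_\alpha(z)=(e^{\alpha z}-1)^2/z^2$ (these play exactly the role of your $\kappa_\sigma$: bounded on $\R$, $O(|x|^{-2})$ decay, diagram in $i\R$), with density of $\linspan\{f_\alpha\}$ supplied by a Borel-transform duality lemma; the criterion then absorbs all the disjointification and interference bookkeeping you are attempting by hand. However, your proposal has a concrete gap at the approximation step. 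You assert that $T_1^n u_{k,n}=e_{i\alpha}p_k\kappa_{\sigma_k}\to e_\beta P$ in every seminorm $\|\cdot\|_{K,N}$ because ``both terms are separately annihilated by the weight \dots since their diagrams lie in $K$.'' That each \emph{fixed} element of $\Exp(K)$ has weighted modulus tending to $0$ at infinity does not give this uniformly along your sequence of approximants, and in fact the convergence fails. Write $\beta=i\alpha'$ with $\alpha,\alpha'\in[a,b]$ and let $p_k$ be the degree-$d_k$ Taylor truncation of $e^{i(\alpha'-\alpha)z}P(z)$. On the ray $w=ir$ one has $|e_{i\alpha}(ir)|\,e^{-H_{[ia,ib]}(ir)}=e^{(a-\alpha)r}$, while for $r$ large compared with $d_k$ the truncation satisfies $|p_{d_k}(ir)|\geq ((\alpha'-\alpha)r)^{d_k}/(2\,d_k!)$ for at least one of any two consecutive degrees; maximizing over $r$ gives
\[
\|e_{i\alpha}p_{d_k}\|_{K,N}\ \gtrsim\ d_k^{-1/2}\Bigl(\tfrac{\alpha'-\alpha}{\alpha-a+1/N}\Bigr)^{d_k},
\]
which blows up whenever $\alpha'-\alpha>\alpha-a+1/N$ (e.g.\ $\alpha$ near $a$, $\alpha'$ near $b$, $N$ large). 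The factor $\kappa_{\sigma_k}$ cannot rescue this: on the imaginary axis $|\kappa_\sigma(ir)|=(\sinh(\delta r)/(\delta r))^m\geq 1$. So jet matching against a fixed carrier frequency does not yield a dense set of reachable targets in the topology of $\Exp(K)$.

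The repair is to drop the jet matching and let the carrier match the target: take $u_{k,n}(w)=e_{\beta}(w-n)P(w-n)\kappa_{\sigma_k}(w-n)$ with $\beta=i\alpha'$, $\alpha'\in(a,b)$; then $e_\beta P\kappa_{\sigma_k}\to e_\beta P$ in $\|\cdot\|_{K,N}$ once $\sigma_k<1/(2N)$, and such targets are dense (the paper proves density of $\{Pe_\alpha\}$ for a single $\alpha\in K$ via the Borel transform and Runge's theorem). At that point your blocks are essentially the paper's. Two further cautions: your interference paragraph is a sketch of precisely what the frequent universality criterion proves, and the smallness of the residual must be obtained in the global weighted seminorms simultaneously with the separation of same-family neighbours by more than a lobe width, so the choices of $\sigma_k$, $m_k$, $N_k$ and the densities of the $A_k$ are genuinely coupled; and your accounting never uses the hypothesis $c>1$, whereas the paper needs it to make $\sum_l l^{-c'}$ converge in condition (3) of the criterion --- a sign that the density bookkeeping in your scheme is not yet watertight.
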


\begin{remark}
By the corollary of \cite[Theorem 2.1]{hilbertentire}, which is an extension of the result of S. M. Duyos-Ruis mentioned above, we have $f\in \HC(T_1,\Exp(\{0\}))\neq \emptyset$. The assumptions of this corollary are satisfied since the polynomials are dense in $\Exp(\{0\})$ (see Proof of Theorem \ref{reelklein}). However, a function $f$ of exponential type zero (equivalently $f\in\Exp(\{0\})$) cannot satisfy $|f(x)|=O(x^k)$, $k\in \N$, on the real axis unless it is a polynomial of degree less or equal than $k$ (cf. \cite[Corollary 4.1.16]{berenstein}). Considering that $K(f)+\alpha=K(f e_\alpha)$, as outlined above, one verifies that no transcendental function of exponential type that has a singleton conjugate indicator diagram satisfies such a growth on the real axis. This shows that the assumptions in Theorem \ref{reelklein} and Theorem \ref{frequab} cannot be weakened to singleton sets $K$.
\end{remark}

\section{Further Results and Proofs}
In \cite{freqerd} and \cite{freqerderratum}, A. Bonilla and K.-G. Grosse-Erdmann extend a sufficient condition from \cite{frequent}, which is known as the frequent hypercyclicity criterion, to the case of frequent universality. We give it in the Fr\'{e}chet space version which is adequate for our application: A family of series $\sum_{n=1}^\infty x_{n,k}$, $k\in\N$, in some Fr\'{e}chet space $X$ is said to converge unconditionally, uniformly in $k$ if for every continuous seminorm $\rho$ on $X$ and every $\varepsilon>0$, there is some $N\in \N$ such that for any finite set $F\subset \N$ that satisfies $F\cap\{1,...,N\}=\emptyset$ and every $k\in\N$ we have $\rho(\sum_{n\in F}x_{n,k})<\varepsilon$.

\begin{theorem}[(Frequent Universality Criterion)]\label{freqcriterd}
Let $X$ be a Fr\'{e}chet space, $Y$ a separable Fr\'{e}chet space and $(T_n)$ a sequence of continuous operators from $X$ to $Y$. Assume that there are a dense subset $Y_0$ of $Y$ and mappings $S_n:Y_0\rightarrow X$, $n\in\N$, such that for all $y\in Y_0$:
\begin{enumerate}
\item[(1)] $\sum_{n=1}^k T_{k} S_{k-n} y$ converges unconditionally in $Y$, uniformly in $k\in\N$;
\item[(2)] $\sum_{n=1}^\infty T_{k} S_{k+n} y$ converges unconditionally in $Y$, uniformly in $k\in\N$;
\item[(3)] $\sum_{n=1}^\infty S_n y$ converges unconditionally in $X$;
\item[(4)] $T_n S_n y\rightarrow y$ as $n\rightarrow \infty$.
\end{enumerate}
Then $(T_n)$ is frequently universal.
\end{theorem}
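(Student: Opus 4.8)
The plan is to prove the Frequent Universality Criterion by constructing, for a well-chosen frequently hypercyclic vector of the backward-shift-type dynamics, a single vector $x \in X$ whose orbit under $(T_n)$ hits every basic open set of $Y$ along a set of indices of positive lower density. The standard device for this, going back to the construction in \cite{frequent} and adapted to the universal setting in \cite{freqerd}, is to fix a countable dense sequence $(y_j)_{j\in\N}$ in $Y_0$ and to partition $\N$ into pairwise disjoint sets $A_j$ ($j\in\N$), each of positive lower density, that are moreover sufficiently separated — concretely, one asks for a minimal gap between consecutive elements within and across the $A_j$ that grows with $j$. Such a system of sets is provided by a standard number-theoretic lemma (the disjointification into arithmetic-progression-like sets); I would invoke it as a black box. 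The candidate vector is then
\begin{align*}
 x := \sum_{j\in\N}\ \sum_{n\in A_j} S_n y_j,
\end{align*}
and the first task is to show this series converges in $X$. This follows from hypothesis (3) together with the separation of the $A_j$: the unconditional convergence of $\sum_n S_n y_j$ for each fixed $j$, combined with the growing gaps, lets one sum over all $j$ simultaneously.

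\emph{Next I would} estimate, for a fixed index $k$ and the ``target'' index $j(k)$ with $k\in A_{j(k)}$, the distance in $Y$ between $T_k x$ and the desired point $y_{j(k)}$. Splitting the double sum defining $T_k x$ according to whether the summation index $n$ equals $k$, is less than $k$, or exceeds $k$, I get
\begin{align*}
 T_k x = T_k S_k y_{j(k)}\ +\ \sum_{\substack{n<k}} T_k S_n(\cdot)\ +\ \sum_{\substack{n>k}} T_k S_n(\cdot),
\end{align*}
where the inner terms range over the appropriate $y_j$. The leading term tends to $y_{j(k)}$ by hypothesis (4). The two correction sums are controlled by hypotheses (1) and (2): these are exactly the ``backward'' and ``forward'' tails $\sum_{n\ge1} T_k S_{k-n}y$ and $\sum_{n\ge1} T_k S_{k+n}y$, and the requirement that they converge \emph{unconditionally and uniformly in $k$} is what guarantees that their tails over the sparse index sets $A_j$ (for $j\neq j(k)$) can be made uniformly small, using the separation of the $A_j$ to push the relevant indices past any prescribed threshold $N$. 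Putting these together, one shows that for every continuous seminorm $\rho$ and every $\varepsilon$, the set of $k$ for which $\rho(T_k x - y_{j(k)}) < \varepsilon$ contains $A_{j(k)}$ up to a finite exceptional set, and hence has positive lower density.

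\emph{To finish,} I would translate this into the definition of frequent universality: given a nonempty open $U\subseteq Y$, pick $j$ with $y_j\in U$ and a ball around $y_j$ inside $U$; by the previous step, $T_k x$ lies in that ball for all but finitely many $k\in A_j$, and since $A_j$ has positive lower density, so does $\{k : T_k x\in U\}$. \emph{The main obstacle} I expect is the bookkeeping in the correction-sum estimate: one must show that the tails over \emph{all} the sets $A_j$ with $j\neq j(k)$, summed together, stay small uniformly in $k$, and this is precisely where the uniform-in-$k$ unconditional convergence in (1) and (2) must be combined with the growth of the gaps in the $A_j$ so that, for each fixed threshold, only finitely many $j$ contribute indices closer than $N$ to $k$. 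Getting the order of quantifiers right — choosing $N$ from the seminorm and $\varepsilon$ first, then exploiting that the separation forces all but finitely many cross-terms to respect that $N$ — is the delicate point; the rest is a routine assembly of the three convergence hypotheses.
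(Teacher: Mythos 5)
The paper does not prove this theorem at all: it is quoted verbatim from Bonilla and Grosse-Erdmann (\cite{freqerd}, \cite{freqerderratum}), who adapted the Bayart--Grivaux frequent hypercyclicity criterion to the universal setting, so there is no in-paper proof to compare against. Your proposal reproduces the standard architecture of that external proof: a countable set $(y_j)$ in $Y_0$ dense in $Y$, a partition of $\N$ into separated sets $A_j$ of positive lower density, the candidate $x=\sum_j\sum_{n\in A_j}S_n y_j$ converging by (3) and a diagonal choice of thresholds, and the split of $T_kx$ into the diagonal term (handled by (4)) and the backward/forward cross-sums (handled by the uniform unconditional convergence in (1) and (2) together with the gap conditions). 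That is the right skeleton.

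There is, however, one genuine flaw in the intermediate claim you state: that for \emph{every} continuous seminorm $\rho$ and \emph{every} $\varepsilon>0$, one has $\rho(T_kx-y_j)<\varepsilon$ for all but finitely many $k\in A_j$, with a single set $A_j$ attached to each dense point $y_j$. This cannot be achieved. The tail estimates from (1) and (2) control only those cross-indices $n$ with $|n-k|>N(\rho,\varepsilon)$, and the separation built into the $A_j$'s is fixed once the sets are constructed; for a threshold $N$ exceeding the separation parameter of $A_j$, infinitely many cross-terms fall inside the uncontrolled range, and no finite exceptional set saves you. Indeed, if the gaps isolating the elements of $A_j$ from all other chosen indices tended to infinity along $A_j$ (which is what your cofiniteness claim would require for every $N$), then $A_j$ would have lower density zero. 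The standard repair is to index the sets not by dense points but by pairs (dense point, basic neighborhood): one fixes in advance, for each pair $(y_l,m)$, the threshold needed to achieve $\rho_m(T_kx-y_l)<1/m$, feeds these thresholds into the separation lemma, and then, given an open set $U$, selects a pair whose associated ball lies in $U$. With that modification --- which only reorganizes the bookkeeping you already identified as the delicate point --- your argument goes through and coincides with the proof in the cited literature.
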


This criterion leads to a general result that, as we will see later, contains Theorem  \ref{reelklein}, \ref{frequab} and the sufficient part of Theorem \ref{satzeins}.

\begin{theorem}\label{reelkleinsatz}
Let $q:[0,\infty)\rightarrow [1,\infty)$ be such that $q(r)\rightarrow \infty$ as $r$ tends to infinity and $d>0$. Then for every increasing sequence $(k_l)_{l\in\N}$ of positive integers that satisfies
\begin{align}\label{ildotti}
q(r)\geq l^c \textnormal{  for all  } r\in \left[(1-\delta)k_l,(1+\delta)k_l\right]=:I_{k_l}
\end{align}
for some $c>1$ and $\delta>0$,
there exists a function $f\in\Exp([-id,id])$ that is frequently universal for $(T_1^{k_l})_{l\in \N}$ on $\Exp([-id,id])$ and such that
\[
|f(x)|=O(q(|x|))
\]
on the real axis.
\end{theorem}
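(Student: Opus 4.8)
The plan is to derive Theorem \ref{reelkleinsatz} from the Frequent Universality Criterion (Theorem \ref{freqcriterd}), applied with $X = Y = \Exp([-id,id])$ and the operators $R_l := T_1^{k_l}$, $l \in \N$. Since the criterion as stated only yields \emph{existence} of a frequently universal vector, whereas we need a frequently universal $f$ with prescribed growth on $\R$, I would work with the explicit vector produced in the proof of the criterion, namely a series $f = \sum_j \sum_{n \in A_j} S_n y_j$, where $(y_j)$ enumerates a countable dense subset of a dense set $Y_0$ and the $A_j \subset \N$ are pairwise disjoint sets of positive lower density which can be taken with rapidly increasing gaps, so that $\min A_j \to \infty$. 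For $Y_0$ I would take the linear span of the exponentials $e_{is}$ with $s$ ranging over $(-d,d)$; each $e_{is}$ lies in $\Exp([-id,id])$ because $K(e_{is}) = \{is\}$ by (\ref{ealphaind}), and finite combinations are bounded on $\R$. Density of $Y_0$ in $\Exp([-id,id])$ I would obtain from the Borel-transform duality, under which $e_{is}$ corresponds to a Cauchy kernel with pole at $is$ and the span becomes dense by Runge's theorem.

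Next I would define $S_l \colon Y_0 \to \Exp([-id,id])$ by setting, on the generators, $S_l(e_{is})(z) := e^{is(z - k_l)}\,\Phi_l(z - k_l)$ and extending linearly, where $\Phi_l$ is a normalized Fejér-type kernel $\Phi_l(z) = \big(\sin(\eta_l z)/(\eta_l z)\big)^{2m}$ with $\Phi_l(0)=1$. The parameters are chosen so that $2m\eta_l < d - |s|$, which keeps $S_l(e_{is}) \in \Exp([-id,id])$ via the additivity (\ref{ealphaind}) of conjugate indicator diagrams together with the fact that $\Phi_l$ has indicator diagram $[-2im\eta_l, 2im\eta_l]$, and with $\eta_l \to 0$ while $\eta_l k_l \to \infty$. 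Since $R_l S_l(e_{is}) = e_{is}\,\Phi_l$ and $\Phi_l \to 1$ in $\Exp([-id,id])$ as $\eta_l \to 0$, condition (4) holds. On $\R$ one has $|S_l(e_{is})(x)| = |\Phi_l(x - k_l)| \le \min\{1, (\eta_l|x-k_l|)^{-2m}\}$, so $S_l(e_{is})$ is peaked at $k_l$ and, because $\eta_l k_l \to \infty$, already small at the endpoints of $I_{k_l}$; this localization is what drives the growth estimate.

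For conditions (1)–(3) I would reduce everything to estimates on translated kernels: up to unimodular factors each summand has the form $e^{is(\cdot)}\Phi_{l'}(\cdot - (k_l - k_{l'}))$, a kernel centered at $k_l - k_{l'}$, whose distance from the origin grows with $|l-l'|$ since $(k_l)$ is increasing. Bounding the seminorms $\|\cdot\|_{K,n}$ of such translates by the corresponding tail values of $\Phi_{l'}$ and summing, the required unconditional convergence, uniform in the operator index, follows from the polynomial decay $(\,\cdot\,)^{-2m}$ once $m$ is large enough; condition (3) is the easiest, as there is no extra translation by $k_l$.

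The decisive step is the growth bound for $f = \sum_j \sum_{n \in A_j} S_n y_j$. For real $x$ I would split the double sum into the \emph{local} indices $n$ with $x \in I_{k_n}$ (for each of which (\ref{ildotti}) gives $q(|x|) \ge n^c$) and the \emph{remote} blocks. The remote contribution is controlled by the summable tails $\sum_n (\eta_n|x - k_n|)^{-2m}$ from the previous step and is $O(1)$, hence absorbed since $q \ge 1$; the local contribution is of the order of the coefficient norms $C_{j(n)} := \sum|c|$ of the relevant generators $y_{j(n)}$. Because the $A_j$ satisfy $\min A_j \to \infty$, the indices $j(n)$ encountered for $n$ with $x \in I_{k_n}$ stay small relative to $n$, so these $C_{j(n)}$ are $O(n^c) = O(q(|x|))$, yielding $|f(x)| = O(q(|x|))$. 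I expect the main obstacle to be the simultaneous tuning of $\eta_l$, $m$ and of the gap structure of the $A_j$: one must reconcile $\Phi_l \to 1$ (for (4)) with fast enough polynomial decay and sparse enough overlaps (for (1)–(3) and for the remote part), while keeping both the number of local indices at each $x$ and their coefficient sizes $C_{j(n)}$ within the allowance $n^c$ granted by (\ref{ildotti}); the hypothesis $c>1$ is what makes the relevant tail series converge and leaves room in this balancing.
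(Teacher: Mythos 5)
There is a genuine gap, and it sits at the center of your construction. You take $Y_0$ to be the span of pure exponentials $e_{is}$ and push all of the localization into the right inverses $S_l(e_{is})=e^{is(\cdot-k_l)}\Phi_l(\cdot-k_l)$, where the kernel widths $1/\eta_l$ must tend to infinity so that $\Phi_l\rightarrow 1$ and condition (4) of Theorem \ref{freqcriterd} holds. But then conditions (1) and (2) fail for slowly growing $(k_l)$ --- and the case $k_l=l$ is exactly the one the theorem must cover (it is the case used later for Theorem \ref{frequab}, with $q(r)=1+r^c$). Concretely, with $k_l=l$ one has $T_1^{k_l}S_{l+n}(e_{is})(z)=e^{is(z-n)}\Phi_{l+n}(z-n)$, and evaluating the weighted sup at $z=0$ gives $\|T_1^{k_l}S_{l+n}e_{is}\|_{K,N}\geq|\Phi_{l+n}(-n)|=\bigl(\sin(\eta_{l+n}n)/(\eta_{l+n}n)\bigr)^{2m}\rightarrow 1$ as $l\rightarrow\infty$ for each fixed $n$, since $\eta_{l+n}\rightarrow 0$. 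So the individual terms of the series in condition (2) do not tend to $0$ uniformly in the operator index, and unconditional convergence uniform in that index is impossible. The same widening of the kernels undermines your ``local'' growth estimate: for real $x$ there can be of order $\delta x$ indices $n$ with $x\in I_{k_n}$, each contributing $O(1)$ on a window of length $1/\eta_n\rightarrow\infty$, so the local sum is not visibly $O(q(|x|))$ without a delicate tuning you have not supplied. Any repair forces you to freeze the kernel width, i.e.\ to absorb a fixed kernel into the dense set itself.

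That is precisely what the paper does, and it also sidesteps your need to re-open the proof of the criterion. The dense set is $Y_0=\linspan\{f_\alpha:\alpha\in[-i\frac{d}{2},i\frac{d}{2}]\}$ with $f_\alpha(z)=(e^{\alpha z}-1)^2/z^2$ --- exponentials already multiplied by a fixed kernel with $1/z^2$ decay --- whose density in $\Exp([-id,id])$ is the content of Lemma \ref{density}; the maps $S_{k_l}$ are plain translations $f\mapsto f(\cdot-k_l)$, so $T_1^{k_l}S_{k_l}=\id$ and (4) is exact, while (1)--(3) follow from the $O(1/k^2)$ decay. Most importantly, the growth bound is not extracted from the explicit universal vector of the criterion's proof: the criterion is applied to $T_1^{k_l}:E\rightarrow\Exp(K)$, where $E$ is the Fr\'echet space of all $f\in\Exp(K)$ with $\sup_{x\in\R}|f(x)|/q(|x|)<\infty$, normed by $\|f\|_{K,n}+\sup_{x\in\R}|f(x)|/q(|x|)$. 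The frequently universal vector then lies in $E$ by construction, and hypothesis (\ref{ildotti}) with $c>1$ enters only in verifying condition (3) in $E$, via the dichotomy $x\in I_{k_l}\cup(-I_{k_l})$ (where $q(|x|)\geq l^c$) versus $|x-k_l|>\delta k_l$ (where the $1/z^2$ decay gives $O(1/k_l^2)$). You should adopt at least this last device; it converts the hardest step of your plan into a routine summability check.
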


Our next aim is to find restriction for the conjugate indicator diagram of the functions that are of exponential type and belong to $\FHC(T_1,H(\C))$. Since the topolgy of $H(\C)$ is weaker than the topology of $\Exp(K)$, this restrictions extend to necessary conditions for compact and convex sets $K\subset \C$ to allow that $\FHC(T_1,\Exp(K))\neq\emptyset$.

\begin{theorem}\label{horizontal}
Let $v,w\in \C$ and $K:=[v,w]$. Further, if $v\neq w$, we assume that $K$ is a non-vertical line segment, that is, $\Reee(v)\neq \Reee(w)$. Then $\FHC(T_1,H(\C))\cap \Exp(K)=\emptyset$.
\end{theorem}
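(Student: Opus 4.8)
The plan is to assume, for contradiction, that some $f\in\FHC(T_1,H(\C))$ is of exponential type with $K(f)\subseteq K$, and to play the density forced by frequent hypercyclicity against the growth prescribed by $K(f)$, exploiting that the hypothesis forces $K(f)$ to have a \emph{unique} point of largest real part. First I would translate frequent hypercyclicity into a statement about $f$ on disks marching out along the positive real axis. Since restriction to a compact disk is continuous on $H(\C)$, for every target $g$, every $R>0$ and every $\varepsilon>0$ the set $\{n:\sup_{|z|\le R}|f(z+n)-g(z)|<\varepsilon\}$ has positive lower density. Applying this to $g\equiv 0$ and $g\equiv 1$ gives positive-lower-density sets $A_0$ and $A_1$ such that $|f|<\varepsilon$ on $\{|z-n|\le R\}$ for $n\in A_0$, while $|f-1|<\tfrac12$ on $\{|z-n|\le R\}$ for $n\in A_1$; in the latter case $f$ is zero free there and lies in the annulus $\tfrac12<|f|<\tfrac32$. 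This ``frequently nearly constant on a fixed disk'' property is the pressure I intend to use.

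Next I record the structural consequence. Whether $K=[v,w]$ is a non-vertical segment or a singleton, $K(f)$ is either a point or a non-vertical subsegment, so $\Ree$ is strictly monotone along it and $\beta:=\max\{\Ree u:u\in K(f)\}=h_f(0)$ is attained at a \emph{unique} point $w_0\in K(f)$. Passing to $F:=fe_{-w_0}$, which by $K(fe_\alpha)=\alpha+K(f)$ satisfies $K(F)=K(f)-w_0\subseteq\{\Ree\le0\}$ and meets $i\R$ only at $0$, normalizes the geometry at the cost of replacing $T_1$ by $e^{w_0}T_1$. I would then split according to the sign of $\beta$. If $\beta<0$ then $K(f)\subseteq\{\Ree<0\}$, so $h_f(\Theta)<0$ on a neighbourhood of $\Theta=0$ and $f$ decays uniformly in a sector about the positive axis; hence $\sup_{|z|\le R}|f(z+n)|\to0$, i.e.\ $T_1^n f\to0$, contradicting already hypercyclicity. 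This is the trivial case.

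If $\beta>0$ (every non-vertical segment with right endpoint in the open right half plane, and every singleton with $\Ree v>0$) I would argue that $A_1$ must be finite. The idea is that near the positive axis $f$ is dominated by the single mode $e^{w_0 z}$ of size $\asymp e^{\beta\,\Ree z}$, so the locus where $f$ falls back into the bounded annulus $\tfrac12<|f|<\tfrac32$ is squeezed into exponentially small neighbourhoods of the zeros of $f$ and cannot fill a disk of the fixed radius $R$ once $n$ is large. Quantitatively, from $|f-1|<\tfrac12$ on $\{|z-n|\le R\}$ Cauchy's estimate makes the logarithmic derivative $f'/f$ uniformly small on $\{|z-n|\le R/2\}$ for all $n\in A_1$, whereas the value-distribution theory of exponential-type functions (the relation between the indicator and the averaged growth, controlled by the density of zeros) forces $\Ree(f'/f)$ to have average $\beta>0$ along the positive axis off a set of density zero. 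The clash excludes $\ldens(A_1)>0$.

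The remaining and hardest case is $\beta=0$: the non-vertical segments tilted into the left half plane (right endpoint $w_0\in i\R$, $\Ree v<0$) and the on-axis singletons $\{ib\}$. Here $f$ is only subexponential on the positive axis, so the growth argument is unavailable, and the obstruction is instead spectral: near the positive axis $f$ is governed by the \emph{single} unimodular frequency $e^{\,i\,\Imm(w_0)z}$ times a subexponential factor, while frequent approximation of targets of differing argument and modulus demands a continuum of unimodular frequencies. I expect this to be the main obstacle. My intended route is to reduce, via $F=fe_{-w_0}$ with $|e^{w_0}|=1$, to the assertion that $\lambda T_1$ with $|\lambda|=1$ has no frequently hypercyclic vector exhibiting this one-sided, one-frequency behaviour; for the genuine type-zero endpoint (the singleton $\{0\}$, and after the unimodular twist the singletons $\{ib\}$) this is essentially the exclusion of $M_f(r)=O(e^{\varepsilon r})$ from \cite{frequenterd}. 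The real work lies precisely in (i) supplying the value-distribution estimate used in the case $\beta>0$, and (ii) carrying the type-zero exclusion through both the unimodular twist and the one-sided directional refinement needed for $\beta=0$, the latter being the step that genuinely distinguishes the (merely) hypercyclic from the frequently hypercyclic functions.
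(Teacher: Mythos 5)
Your write-up is a plan rather than a proof: the two cases that carry all the content ($\beta>0$ and $\beta=0$) are left as declared intentions, and you say so yourself (``the real work lies precisely in\dots''). In the case $\beta>0$, the assertion that $\Reee(f'/f)$ must have average $\beta$ along the positive axis off a set of density zero is a completely-regular-growth type statement that is not true for arbitrary functions of exponential type without further input (precisely the kind of input a zero-counting formula provides), so it cannot simply be cited. In the case $\beta=0$ the reduction to the known exclusion of frequently Birkhoff-universal functions of exponential type zero does not go through: after the twist $F=fe_{-w_0}$ the set $K(F)$ lies in $\{\Reee\le 0\}$ and meets $i\R$ only at $0$, but $F$ is \emph{not} of exponential type zero (a non-vertical segment $[v,ib]$ with $\Reee(v)<0$ gives genuine exponential growth towards the negative real axis), so the result of \cite{frequenterd} does not apply and the ``one-sided, one-frequency'' obstruction you appeal to is exactly what would have to be proved. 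As it stands, no case of the theorem except the trivial $\beta<0$ one is actually established.

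For comparison, the paper's proof needs no case distinction on the position of $K(f)$. It approximates a target with a zero (namely $g(z)=z$) on disks around a positive-lower-density set of integers $\lambda_n$ and applies Rouch\'e's theorem to produce zeros $\tilde\lambda_n$ of $f$ with $\ldens((\tilde\lambda_n))>0$ clustered along the positive real axis. It then invokes a lemma derived from the Carleman formula: for $f$ of exponential type, the lower density of zeros in a sector $\{|\arg(z)|<\gamma\}$, $\gamma<\pi/2$, is at most $c/(\pi\cos\gamma)$, where $c$ is half the vertical extent of $K(f)$. After multiplying by a suitable $e_\alpha$ and rotating the variable by the angle of the segment (possible with angle of modulus less than $\pi/2$ precisely because the segment is non-vertical or a point), the conjugate indicator diagram becomes horizontal, so $c=0$ and the zero density must vanish --- a contradiction. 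If you want to salvage your route, the Carleman/Jensen-type inequality is the missing quantitative ingredient in both of your hard cases; but note that once you have it, extracting zeros via Rouch\'e is both simpler and stronger than tracking the sets $A_0$, $A_1$ where $f$ is nearly constant.
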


\begin{remark}
The above result implies that, in contrast to Remark \ref{bemerkeins}, there is no function $f\in \FHC(T_1,H(\C))$ that is of exponential type and such that 
$h_f(\Theta)=-b\sin(\Theta)+a\,|\sin(\Theta+\varepsilon)|$ with $0<\varepsilon<\pi$ and $a>0$, $b\in \R$.
\end{remark}

In the next result, we only consider functions of exponential type whose conjugate indicator diagram is contained in the left half-plane. Under this assumption, Theorem \ref{horizontal} is extended to more general sets.

\begin{theorem}\label{einpunktig}
Let $K$ be a compact, convex subset of $\{z:\Reee(z)\leq 0\}$ such that $K\cap i\R$ is a singleton. Then $\FHC(T_1,H(\C))\cap \Exp(K)=\emptyset$.
\end{theorem}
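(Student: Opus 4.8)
I would argue by contradiction, isolating as the decisive feature the fact that under the hypotheses $ic:=K\cap i\R$ is the unique point of $K$ of maximal real part. Suppose $f\in\FHC(T_1,H(\C))\cap\Exp(K)$. Since $K(f)\subset K\subset\{\Reee(z)\le0\}$, the indicator in the positive real direction satisfies $h_f(0)=\sup\{\Reee(u):u\in K(f)\}\le0$. If this supremum were strictly negative, then by continuity of the indicator $h_f(\Theta)<0$ for $\Theta$ in a whole sector about $0$, so $f$ would decay exponentially on a neighbourhood of the positive real axis and $T_1^nf\to0$ uniformly on compacta, which is incompatible with the density of the orbit. Hence $h_f(0)=0$, and since the only points of $K(f)$ with vanishing real part lie in $K(f)\cap i\R\subseteq K\cap i\R=\{ic\}$, we conclude $ic\in K(f)$ and that $ic$ is the unique point of $K(f)$ of maximal real part.

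First I would treat the case of positive exponential type, $K(f)\neq\{ic\}$. Here I would use the P\'olya--Borel representation $f(z+n)=\zwpi\oint_\Gamma F(u)\,e^{nu}e^{zu}\,du$, where $F$ is the Borel transform of $f$ (holomorphic off $K(f)$) and $\Gamma$ is a contour hugging $K(f)$. Because $|e^{nu}|=e^{n\Reee(u)}$, while every point of $K(f)$ bounded away from $ic$ satisfies $\Reee(u)\le-\delta<0$, the weight $e^{nu}$ forces the integral to concentrate near $ic$ as $n\to\infty$. Splitting off the term that is constant in $z$ and estimating the remainder via $e^{zu}-1=O(|u-ic|)$ near $ic$, I expect to obtain $\dist\!\big(f(\cdot+n),\ \C\,e_{ic}\big)\to0$ uniformly on compact sets, where $\C\,e_{ic}=\{\lambda e_{ic}:\lambda\in\C\}$. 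Since this one-parameter family is nowhere dense in $H(\C)$, the orbit cannot be dense, contradicting hypercyclicity (a fortiori frequent hypercyclicity). This concentration mechanism is the same one underlying the singleton case of Theorem \ref{horizontal}, now carried out for genuinely two-dimensional $K$, and in this case the frequency hypothesis is not even needed.

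It remains to handle $K(f)=\{ic\}$, that is $f=e_{ic}h$ with $h$ of exponential type zero. Here the concentration argument breaks down precisely because the essential singularity of $F$ at $ic$ — the mechanism behind the hypercyclic functions of Duyos-Ruiz — can spread the integral, and this is exactly the point where the frequency hypothesis becomes indispensable. I would remove the unimodular obstruction by multiplying with $e_{-ic}$: as noted in the introduction, $g\mapsto e_{-ic}g$ conjugates $T_1$ to $e^{-ic}T_1$, so $h=e_{-ic}f$ is frequently hypercyclic for $e^{-ic}T_1$ and is of exponential type zero. The argument behind \cite{frequenterd} applies equally to the unimodularly weighted shift $e^{-ic}T_1$ and excludes frequently hypercyclic vectors $h$ with $M_h(r)=O(e^{\varepsilon r})$ for every $\varepsilon>0$; applying it to $h$ yields the contradiction.

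The step I expect to be the main obstacle is the concentration estimate in the positive-type case: one must control the Borel transform $F$ near the single boundary point $ic$ and show that the $z$-dependent part of the contour integral is negligible against the constant part, uniformly on compacta and over the whole of $\Gamma$. The crucial input is that $i\R$ is a supporting line of $K(f)$ meeting it only at $ic$, so that the growth of $f$ in the normal direction $\Theta=0$ is subexponential ($h_f(0)=0$); this should force the singularity of $F$ at $ic$ to be mild (of finite order) and rule out the spreading that would otherwise enlarge $K(f)$ in the imaginary directions. Making this Phragm\'en--Lindel\"of/Tauberian analysis near $ic$ rigorous is the technical heart of the proof.
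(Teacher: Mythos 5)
Your Case 1 is where the argument breaks down. The concentration claim --- that $K(f)\neq\{ic\}$ forces $\dist\bigl(f(\cdot+n),\C e_{ic}\bigr)\to 0$ in $H(\C)$, so that plain hypercyclicity is already impossible --- is false. Take a Birkhoff-universal function $f_0$ of exponential type zero (Duyos-Ruiz), so $K(f_0)=\{0\}$, and set $f:=f_0+e_{-1}$. Then $K(f)=[-1,0]$ is a non-singleton compact convex set in the closed left half-plane meeting $i\R$ only at $0$, and $f(z+n)=f_0(z+n)+e^{-n}e^{-z}$ still has dense orbit in $H(\C)$; in particular the orbit does not collapse onto the one-dimensional family $\C e_0$ of constants. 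What goes wrong in your contour analysis is exactly the mechanism you banish to Case 2: the singularity of the Borel transform at $ic$ can be essential and ``spread'' the integral regardless of whether there are further singularities elsewhere in the left half-plane --- those only contribute exponentially decaying terms and do nothing to tame the local behaviour at $ic$. So no Phragm\'en--Lindel\"of analysis at $ic$ will rescue the concentration estimate, and the frequency hypothesis is indispensable in both of your cases.

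The paper's proof supplies the quantitative idea that is missing here. It sets $K_1:=K(f)\cap\{z:\Reee(z)\geq-\varepsilon\}$ and $K_2:=\overline{K(f)\setminus K_1}$ and, via Aronszajn's theorem applied to the Borel transform, writes $f=h_1+h_2$ with $K(h_i)\subset K_i$. Since $K_2$ lies strictly in the left half-plane, $h_2(z+\lambda_n)\to 0$ along the positive-density sequence $(\lambda_n)$ on which $f(\cdot+\lambda_n)$ approximates $z$; hence the approximation passes to $h_1$, and Rouch\'e's theorem produces a zero sequence of $h_1$ near the positive real axis with lower density $\ldens((\lambda_n))>0$. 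The singleton hypothesis on $K\cap i\R$ is used only to choose $\varepsilon$ so small that the imaginary extent of $K_1$, hence of $K(h_1)$, is below $2\pi\,\ldens((\lambda_n))$; Carleman's formula (Lemma \ref{nullenlemma}) then bounds the density of zeros of $h_1$ in a narrow sector by that extent divided by $2\pi\cos\gamma$, which gives the contradiction. Your Case 2 reduction of the singleton case to the growth result of \cite{frequenterd} for the unimodularly twisted translation is plausible but is asserted rather than proved, and in any event it cannot absorb the non-singleton situation above: after the decomposition one faces a function $h_1$ whose conjugate indicator diagram is small but not a point, and the Carleman estimate is precisely the tool that handles ``small but positive'' imaginary extent. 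That tool, or a substitute for it, is absent from your proposal.
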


\begin{remark}\label{bemerkungHC}
In case that $K(f)\subset \{z:\Reee(z)<0\}$, the corresponding function $f$ tends to zero in the direction of the positive real axis and hence cannot be a Birkhoff-universal function (in the topology of $\Exp(K)$ or $H(\C)$). Consequently, $\HC(T_1, \Exp(K))=\emptyset$ whenever $K$ is strictly contained in the left half-plane. Since the hypercyclicity of $T_1$ on $\Exp(K)$ is equivalent to the hypercyclicity of its inverse $T_{-1}$ $(f\mapsto f(\cdot-1))$ on $\Exp(K)$ (cf. \cite[Corollary 1.3]{baymathbook}), we can conclude that $\HC(T_1,\Exp(K))$ is also empty in the case that $K$ is strictly contained in the right half-plane. However, we do not know if $\HC(T_1,H(\C))\cap \Exp(K)$ is non-empty for $K\subset\{z:\Reee(z)>0\}$. 
\end{remark}

Before giving the proofs to the above results, some notations and auxiliary facts are introduced.
For a compact set $K\subset \C$, we denote by $K^{-1}$ the set $\{z:1/z\in K\}\subset \C_\infty$, where $\C_\infty$ is the extended complex plane $\C\cup \{\infty\}$ and as usual $1/0:=\infty$. By $H_0(\C\setminus K)$ we mean the space of functions holomorphic on $\C\setminus K$ that vanish at infinity, and by $H(\C_\infty\setminus K^{-1})$ the space of functions holomorphic on $\C_\infty\setminus K^{-1}$ that vanish at infinity if $\infty\notin K^{-1}$. Both spaces are endowed with the topology of uniform convergence on compact sets. Recall that a function $F$ is holomorphic at infinity if $F(1/z)$ is holomorphic at the origin.
For a function $f$ of exponential type, $\Bo f(z):=\sum_{n=0}^\infty f^{(n)}(0)/z^{n+1}$ is called the \textbf{Borel transform} of $f$. The Borel transform is a holomorphic function on some neighbourhood of infinity that vanishes at infinity. It is known that $\Bo f$ admits an analytic continuation to $\C\setminus K(f)$ and further, given some compact, convex set $K\subset\C$,
\[
\Bo:=\Bo_K:\Exp(K)\rightarrow H_0(\C\setminus K),\ f\mapsto \Bo f|_{\C\setminus K}
\]
is an isomorphism (cf. \cite{morimoto}). For technical reasons, we consider $\TBo f(z):=1/z\,\Bo f(1/z)$ which defines a holomorphic function in $\C_\infty\setminus K(f)^{-1}$ that vanishes at infinity in case that $\infty\notin K(f)^{-1}$. Analogously to the Borel transform, for a given compact, convex set $K\subset \C$
\[
\TBo:=\TBo_K:\Exp(K)\rightarrow H_0(\C_\infty\setminus K^{-1}),\ f\mapsto \TBo f|_{\C_\infty\setminus K^{-1}}
\]
is an isomorphism.
The transform $\TBo$ is also used in \cite{muellerconv}.

\begin{lemma}\label{density}
Let $f$ be an entire function of exponential type. We further assume that $K$ is a compact, convex subset of $\C$ containing the origin and $A\subset\C$ is a bounded infinite set such that $f_\alpha(z):=f(\alpha z)$ belongs to $\Exp(K)$ for all $\alpha\in A$. Then $\linspan\{f_\alpha:\alpha\in A\}$ is dense in $\Exp(K)$ if and only if $f^{(k)}(0)\neq 0$ for every non-negative integer $k$.
\end{lemma}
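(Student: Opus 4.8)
The plan is to transport the problem to the Borel side via the isomorphism $\TBo_K\colon\Exp(K)\to H_0(\C_\infty\setminus K^{-1})$ and to exploit that $\TBo$ intertwines dilation. Setting $G:=\TBo f$, the identity $\TBo f(z)=\sum_{n\ge 0}f^{(n)}(0)z^n$ (which follows from the definitions of $\Bo$ and $\TBo$ and converges near $0$ because $f$ is of exponential type) gives at once $\TBo f_\alpha(z)=\TBo f(\alpha z)=G(\alpha z)=:G_\alpha(z)$, since $f_\alpha^{(n)}(0)=\alpha^nf^{(n)}(0)$. As $\TBo_K$ is an isomorphism, $\linspan\{f_\alpha:\alpha\in A\}$ is dense in $\Exp(K)$ if and only if $\linspan\{G_\alpha:\alpha\in A\}$ is dense in the target, and the Taylor coefficients of $G$ at the origin are exactly the numbers $f^{(n)}(0)$. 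Write $\Omega:=\C_\infty\setminus K^{-1}$; since $K$ is bounded we have $0\in\Omega$, and since $0\in K$ we have $\infty\in K^{-1}$, so the target carries no condition at infinity and is just the space $H(\Omega)$ of functions holomorphic on the open set $\Omega\subseteq\C$.

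The necessity is the easy direction. If $f^{(k)}(0)=0$ for some $k$, then the $k$-th Taylor coefficient of every $G_\alpha$ at $0$ equals $f^{(k)}(0)\alpha^k=0$. The functional on $H(\Omega)$ extracting the $k$-th coefficient at $0$ is continuous (Cauchy's formula on a small circle about $0$), annihilates every $G_\alpha$, and is nonzero (it sends $z^k\mapsto 1$). Pulling it back through $\TBo_K$ yields a nonzero continuous functional annihilating $\{f_\alpha\}$, so the span is not dense.

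For the sufficiency assume all $f^{(n)}(0)\neq 0$ and let $L$ be a continuous functional on $H(\Omega)$ with $L(G_\alpha)=0$ for all $\alpha\in A$; by Hahn--Banach it suffices to show $L=0$. By continuity with respect to one sup-seminorm and the Riesz representation theorem there are a compact $C\subseteq\Omega$ and a measure $\mu$ on $C$ with $L(h)=\int_C h\,d\mu$ for all $h\in H(\Omega)$. Thus $\Phi(\alpha):=\int_C G(\alpha z)\,d\mu(z)$ is holomorphic on the open set $V:=\{\alpha:\alpha C\subseteq\C_\infty\setminus K(f)^{-1}\}$ and agrees with $L(G_\alpha)$ for every $\alpha$ with $f_\alpha\in\Exp(K)$; in particular $\Phi$ vanishes on $A$. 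Using the scaling property $K(f_\alpha)=\alpha K(f)$, the set of admissible $\alpha$ is $D:=\{\alpha:\alpha K(f)\subseteq K\}\supseteq A$. A short computation gives $D\subseteq V$: if $\alpha\in D$, $z\in\Omega$ and $\alpha z\in K(f)^{-1}$, then $1/(\alpha z)\in K(f)$, whence $1/z=\alpha/(\alpha z)\in\alpha K(f)\subseteq K$, i.e. $z\in K^{-1}$, contradicting $z\in\Omega$. Because $K$ is convex and contains $0$, $D$ is star-shaped about $0$, hence connected, and $D$ is closed; as $A$ is bounded and infinite, it has an accumulation point $\alpha_0\in D\subseteq V$. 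Restricting $\Phi$ to the component of $V$ containing $D$, the identity theorem forces $\Phi\equiv 0$ there, since the zeros $A$ accumulate at the interior point $\alpha_0$. Expanding $G(\alpha z)=\sum_m f^{(m)}(0)\alpha^m z^m$ for small $\alpha$ and integrating term by term gives $\Phi(\alpha)=\sum_m f^{(m)}(0)\,L(z^m)\,\alpha^m$; comparing with $\Phi\equiv 0$ yields $f^{(m)}(0)L(z^m)=0$, and since $f^{(m)}(0)\neq 0$ we obtain $L(z^m)=0$ for all $m\ge 0$. Finally, $\C_\infty\setminus\Omega=K^{-1}$ is connected (it is the image of the connected set $K$ under inversion), so by Runge's theorem the polynomials are dense in $H(\Omega)$; as $L$ is continuous and kills every monomial, $L=0$. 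Hence $\linspan\{G_\alpha\}$, and therefore $\linspan\{f_\alpha\}$, is dense.

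The main obstacle is the sufficiency, and within it the delicate point is that the accumulation point $\alpha_0$ of $A$ must lie in the \emph{interior} of the domain on which $\Phi$ is holomorphic, so that the identity theorem applies rather than merely producing zeros accumulating at the boundary. This is exactly what the inclusion $D\subseteq V$ with $V$ open secures, and it is the reason for representing $L$ by a measure on a compact subset of $\Omega$ (equivalently, for using that $L$ is carried by a compact set) instead of arguing abstractly. The remaining ingredients---the intertwining $\TBo f_\alpha=\TBo f(\alpha\cdot)$ and the polynomial density via Runge---are then routine.
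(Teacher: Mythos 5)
Your proof is correct and follows essentially the same route as the paper: both reduce the sufficiency to showing that a continuous functional $\Lambda$ annihilating the $f_\alpha$ must vanish, observe that $\alpha\mapsto\Lambda(f_\alpha)$ is holomorphic on a connected open set containing $A\cup\{0\}$, kill it via the identity theorem at an accumulation point of $A$, and then read off the Taylor coefficients at $\alpha=0$ to obtain the moment conditions $f^{(m)}(0)\,\Lambda(z^m)=0$. The only cosmetic difference is that you represent the functional by a measure on a compact subset of $\C_\infty\setminus K^{-1}$ and finish with Runge's theorem, whereas the paper invokes the K\"othe duality to write it as a contour integral against a function $\omega$ holomorphic near $K$ and concludes $\omega\equiv 0$ from the vanishing of its Taylor coefficients at the origin.
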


\begin{proof}
We set $Y:=\linspan\{f_\alpha(z):\alpha\in A\}$.
Since the convergence in the topology of $\Exp(K)$ implies the convergence of the Taylor coefficients, the density of $Y$ ensures that the Taylor coefficients of $f$ do not vanish.\\ 
Let $\Lambda$ be an arbitrary element of the strong dual of $\Exp(K)$ that vanishes on $Y$. We prove that $\Lambda\equiv 0$, which implies the density of $Y$ by a standard application of the Hahn-Banach theorem.\\
By the Köthe duality (cf. \cite{koethedual}) and since $H_0(\C\setminus K)$ and $\Exp(K)$ are isomorphic, we can assume the existence of some $\omega$ holomorphic on a simply connected neighbourhood $\Omega_\omega$ of $K$ that represents $\Lambda$ by 
\begin{align}\label{phgeb}
\left\langle \Lambda,f\right\rangle=\frac{1}{2\pi i} \int_\Gamma \omega(\xi)\, \Bo f(\xi) \,d\xi
\end{align}
where $\Gamma$ is a closed curve around $K$ with positive orientation with respect to the points in $K$.
We show the existence of some connected open set $D$ containing $A\cup\{0\}$ and such that 
\begin{align}\label{pbgeb}
K(f_\alpha) \subset \Omega_\omega \textnormal{ for all } \alpha\in D.
\end{align}
A simple observation using the equality $H_{K(f)}(\xi)=h_f(\arg(\xi))$, $\xi\in \{z:|z|=1\}$, yields $K(f_\alpha)\subset\alpha\, K(f)$ for arbitrary $\alpha\in\C$. Hence, the condition $f_\alpha\in \Exp(K)$ implies $A\, K(f)\subset K$. Since $0\in K$ and $K$ is convex, we obtain $r\,\alpha\, K(f)\subset K$ for all $r\in[0,1]$ and all $\alpha\in A$.  The continuity of the multiplication and the compactness of $K(f)$ imply the existence of some open neighbourhood $U(\alpha,r)$ for each pair $(r,\,\alpha)\in [0,1]\times A$ such that $U(\alpha,r)\,K(f)\subset \Omega_\omega$. Now $D:=\bigcup_{\alpha\in A}\bigcup_{r\in [0,1]} U(\alpha, r)$ has the claimed properties of (\ref{pbgeb}).\\
Since $A$ is assumed to be bounded and infinite, it has an accumulation point in $D$.
Considering the Taylor expansion $\TBo f(z):=\sum_{n=0}^\infty f^{(n)}(0) z^n$, which converges on some neighbourhood of the origin, we immediately obtain 
\begin{align*}
\Bo f_\alpha(\xi) =\frac{\TBo f(\frac{\alpha}{\xi})}{\xi}
\end{align*}
for all $\xi\in \C\setminus K$ and $\alpha \in D$. Now, with (\ref{phgeb}),  
\[
a(\alpha):=\left\langle \Lambda,f_\alpha\right\rangle=\frac{1}{2\pi i} \int_\Gamma \omega(\xi)\, \Bo f_\alpha (\xi) \,d\xi=\frac{1}{2\pi i} \int_\Gamma \frac{\omega(\xi)}{\xi}\,\TBo f\left(\frac{\alpha}{\xi}\right)\, d\xi
\]
defines a holomorphic function on $D$. The condition $\Lambda|_Y\equiv 0$ means that $a$ vanishes on $A$ and, since $A$ has an accumulation point, the identity theorem yields $a\equiv0$. Consequently,
\[
a^{(k)}(\alpha)=\frac{1}{2\pi i} \int_\Gamma \frac{\omega(\xi)}{\xi^{k+1}}\,(\TBo f)^{(k)}\left(\frac{\alpha}{ \xi}\right) \,d\xi=0
\]
for all $\alpha\in D$ and every $k\in\N\cup\{0\}$. With $\alpha=0$ this means 
\begin{align}\label{kldo}
(\TBo f)^{(k)}(0)\,\frac{1}{2\pi i}\, \int_\Gamma \frac{\omega(\xi)}{\xi^{k+1}} \,d\xi=0
\end{align}
for all $k\in \N\cup\{0\}$. The assumption $f^{(k)}(0)\neq 0$ for all $k\in \N\cup \{0\}$ is equivalent to $(\TBo f)^{(k)}(0)\neq 0$ for all $k\in \N\cup \{0\}$ as one can immediately see in the Taylor expansion of $\TBo f$. Consequently, equality (\ref{kldo}) shows $\omega\equiv 0$ and this means $\Lambda\equiv 0$.
\end{proof}

\begin{proof}[Proof of Theorem \ref{reelkleinsatz}]
We set $K:=[-id,id]$
and $Y_0=\linspan\left\{f_\alpha: \alpha\in [-i\frac{d}{2},i\frac{d}{2}]\right\}$ where 
\[f_\alpha(z):=\frac{(e^{\alpha z} -1)^2}{z^2}.
\]
Then $Y_0$ is dense in $\Exp(K)$ by Lemma \ref{density}, and $K(f_\alpha)\subset K$. Furthermore, let
\[
E:=\left\{f\in \Exp(K):\sup\limits_{x\in \R} \frac{|f(x)|}{q(|x|)} <\infty\right\}
\] 
be endowed with the topology induced by the norms $(||\cdot||_n)_{n\in \N}$ defined by
\[
||f||_{n}:= ||f||_{K,n}+ \sup\limits_{x\in\R} \frac{|f(x)|}{q(|x|)}.
\]
Then $E$ is a Fréchet space. \\
We apply the frequent universality criterion to the mappings
\[
T_{1}^{k_l}:E\rightarrow \Exp(K)
\]
and 
\[S_{k_l}:\Exp(K)\rightarrow E
\]
with $S_{k_l}f:=f(\cdot-k_l)$. \\
In a first step, it is shown that in condition (1) and (2) of this criterion, we have absolute convergence for $(k_l)_{l\in\N}=(k)_{k\in\N}$. 
Let $n\in\N$ and $\alpha\in [-i\frac{d}{2},i\frac{d}{2}]$ be fixed. Taking into account that $\alpha$ is purely imaginary, we obtain 
\begin{align}\label{tigerente}
&\left| \left(e^{\alpha (z+k)}-1\right)^2\right|\, e^{-H_K(z)-\frac{1}{2n}|z|}\leq \left(\left|e^{2\alpha z}\right|+2\left|e^{\alpha z}\right|+1\right)\,e^{-H_K(z)-\frac{1}{2n}|z|}<\infty
\end{align}
for all non-negative integers $k$. Hence 
\begin{align}\label{kinzig}
\sup\limits_{|z+k|>\frac{k}{2}}& |f_\alpha(z+k)|\,e^{-H_K(z)-\frac{1}{n}|z|}\notag \\
&\leq \sup\limits_{|z+k|>\frac{k}{2}}\frac{|e^{\alpha(z+k)}-1|^2}{\left(\frac{k}{2}\right)^2}\,e^{-H_K(z)-\frac{1}{n}|z|}= O\left( \frac{1}{k^2}\right).
\end{align}
Again from inequality (\ref{tigerente}), it follows that
\begin{align}\label{deraltekinzig}
\sup\limits_{1\leq|z+k|\leq \frac{k}{2}}& |f_\alpha(z+k)|\,e^{-H_K(z)-\frac{1}{n}|z|}\notag \\
&\leq \sup\limits_{1\leq|z+k|\leq \frac{k}{2}} |e^{\alpha (z+k)}-1|^2\,e^{-H_K(z)-\frac{1}{2n}|z|}\,e^{-\frac{1}{2n}|z|}=O\left( e^{-\frac{k}{4\,n}}\right).
\end{align}
Obviously,
\begin{align}\label{tigerentenbrille}
\sup\limits_{|z+k|\leq1} |f_\alpha(z+k)|\,e^{-H_K(z)-\frac{1}{n}|z|}\leq \sup\limits_{|z|\leq 1} |f_\alpha(z)|\, e^{-\frac{k-1}{n}}=O\left( e^{-\frac{k-1}{n}}\right).
\end{align}
The estimations (\ref{kinzig}), (\ref{deraltekinzig}) and (\ref{tigerentenbrille}) now yield
\[
||T_{1}^k f_\alpha||_{K,n}=\sup\limits_{z\in\C} \left|\frac{(e^{\alpha (z+k)} -1)^2}{(z+k)^2}\right|\,e^{-H_K(z)-\frac{1}{n}|z|}=O\left( \frac{1}{k^2}+ e^{-\frac{k}{4\,n}}+ e^{-\frac{k-1}{n}}\right).
\]
Thus, the series $\sum_{k=1}^\infty ||T_{1}^k f_\alpha||_{K,n}$ converges for every $n\in\N$. Since the convergence remains for every function $f\in Y_0$ and observing that $\sum_{k=1}^m T_1^mS_{m-k} f=\sum_{k=1}^mT_1^kf$, this implies that condition (1) of the frequent universality criterion is satisfied. With the same reasoning, one obtains the unconditional convergence of $\sum_{k=1}^\infty || S_{k} f||_{K,n}$ in $\Exp(K)$ for all $f\in Y_0$ and all $n\in\N$. This yields condition (2) of the frequent universality criterion. Obviously, a similar reasoning holds for all subsequences of $(k)_{k\in\N}$. 
As condition (4) is obvious, there is only (3) left to show. 
On the real axis, the moduli of the functions $f_\alpha$ are bounded. With assumption (\ref{ildotti}), we obtain  
\begin{align}\label{lldotti}
\sup\limits_{x\in I_{k_l}\cup -I_{k_l}} \frac{|f_\alpha(x-k_l)|}{q(|x|)}=O\left( \frac{1}{l^c}\right).
\end{align}
If $x\notin I_{k_l}\cup -I_{k_l}$, then $|x-k_l|>\delta k_l$ and hence
\begin{align}\label{ausduennung}
\sup\limits_{x\in \R\setminus( I_{k_l}\cup-I_{k_l})}\frac{|f_\alpha(x-k_l)|}{q(|x|)}\leq \sup\limits_{x\in \R\setminus( I_{k_l}\cup-I_{k_l})}\frac{|e^{\alpha (x-k_l)}-1|^2}{(\delta k_l)^2}=O\left(\frac{1}{k_l^2}\right).
\end{align} 
Combining (\ref{lldotti}) and (\ref{ausduennung}), we have
\[
\sum_{l=1}^\infty \sup\limits_{x\in \R} \frac{|f_\alpha(x-k_l)|}{q(|x|)}<\infty.
\]
Together with the convergence of $\sum_{l=1}^\infty ||S_{k_l} f_\alpha||_{K,n}$ for each $n\in\N$, which is shown in the first part of the proof, this implies the convergence of $\sum_{l=1}^\infty ||S_{k_l} f_\alpha||_{n}$ for every $n\in \N$. We obtain the unconditional convergence of $\sum_{l=1}^\infty S_{k_l} f$ in $E$ for every $f\in Y_0$. Now, the frequent universality criterion yields that $(T_{1}^{k_l})_{l\in\N}$ is frequently universal from $E$ to $\Exp(K)$. 
\end{proof}

The Carleman formula (cf. \cite[page 45]{rubel}) states that, for a function $f$ holomorphic on an open superset of $\{z:\Reee(z)\geq 0\}$ and having the zeros $(\lambda_n=r_n e^{i\Theta_n})$ in $\{z:\Reee(z)>0\}$ and none on the imaginary axis, the following holds:
\begin{align}\label{christbaum}
\sum_{r_n\leq R} \left(\frac{1}{r_n}-\frac{r_n}{R^2}\right)\,\cos(\Theta_n)=&\frac{1}{2\pi}\int_0^R\left(\frac{1}{t^2}-\frac{1}{R^2}\right)\,\log|f(it)f(-it)|\,dt\,+\notag\\[2mm]
&\frac{1}{\pi R}\int_{-\pi/2}^{\pi/2}\log|f(Re^{i\Theta})|\,\cos(\Theta)\, d\Theta+O(1)
\end{align}
for $R>0$ such that no zero of $f$ is located on $\{z:|z|=R\}\cap \{z:\Reee(z)\geq 0\}$.
We apply this formula to derive the following
\begin{lemma}\label{nullenlemma}
Let $f$ be an entire function of exponential type and
\[c:=\frac{1}{2}\,\max\{|\Imm(z)-\Imm(w)|:z,w\in K(f)\}=\frac{1}{2}\,|h_f(\pi/2)+h_f(-\pi/2)|.
\]
We further assume that $\lambda=(r_n\,e^{i \Theta_n})\subset \{z:|\arg(z)|<\gamma\}$, $0<\gamma<\pi/2$, is a sequence of zeros for $f$. Then, if $f$ is not constantly zero, 
\[\ldens (\lambda)\leq \frac{c}{\pi \,\cos(\gamma)}.
\]  
\end{lemma}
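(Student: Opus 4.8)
The plan is to insert the sequence $\lambda$ into the Carleman formula~(\ref{christbaum}) and to compare the coefficients of $\log R$ on the two sides. Throughout, write $L:=\ldens(\lambda)$ and let $n_\lambda(t):=\#\{n:r_n\le t\}$ denote the counting function of $\lambda$; we assume $f\not\equiv0$. Since~(\ref{christbaum}) requires $f$ to have no zeros on $i\R$, I first reduce to this case: as the zeros of $f$ are discrete, one can fix a small $\eta>0$ so that $f(\cdot-\eta)=:g$ has no zeros on $i\R$. Translation by the real number $\eta$ does not alter the indicator $h_f$ (a bounded shift changes neither the modulus nor the argument of $re^{i\Theta}$ in the limit $r\to\infty$), hence it leaves $K(f)$ and the constant $c$ unchanged; moreover it moves each zero $r_ne^{i\Theta_n}$ further into the cone $\{|\arg z|<\gamma\}$ and changes its modulus by at most $\eta$, so that $\ldens$ is unaffected. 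Thus we may assume from the outset that $f$ has no zeros on $i\R$.

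The first step is a lower estimate for the left-hand side of~(\ref{christbaum}). Every zero of $f$ with $\Reee>0$ has $\cos(\Theta_n)>0$, and $\tfrac1{r_n}-\tfrac{r_n}{R^2}\ge0$ for $r_n\le R$, so every summand on the left of~(\ref{christbaum}) is nonnegative. Keeping only the zeros belonging to $\lambda$ and using $\cos(\Theta_n)\ge\cos(\gamma)$ there, the left-hand side is at least $\cos(\gamma)\sum_{r_n\le R}\big(\tfrac1{r_n}-\tfrac{r_n}{R^2}\big)$, the sum running over $\lambda$. An integration by parts rewrites this sum as $\int_0^R\big(\tfrac1{t^2}+\tfrac1{R^2}\big)n_\lambda(t)\,dt$, the boundary terms vanishing because the kernel is zero at $t=R$ and $n_\lambda$ is zero near the origin. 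Discarding the nonnegative $\tfrac1{R^2}$-part and inserting $n_\lambda(t)\ge(L-\varepsilon)t$, which holds for $t\ge t_0(\varepsilon)$ by the definition of $L$, shows that the left-hand side of~(\ref{christbaum}) is $\ge\cos(\gamma)(L-\varepsilon)\log R+O(1)$.

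The second step is to bound the right-hand side of~(\ref{christbaum}) by $\tfrac{c+\varepsilon}{\pi}\log R+O(1)$. For the arc integral I use only the crude bound $\log|f(Re^{i\Theta})|\le(\tau+1)R+O(1)$ valid for a function of exponential type $\tau$; since $\cos(\Theta)\ge0$ on $[-\pi/2,\pi/2]$ and the prefactor is $1/R$, this term is $O(1)$. For the integral along the imaginary axis I use the defining inequalities $\log|f(\pm it)|\le(h_f(\pm\pi/2)+\varepsilon)t$ for $t\ge t_0$, together with $h_f(\pi/2)+h_f(-\pi/2)=2c$, to obtain $\log|f(it)f(-it)|\le(2c+2\varepsilon)t+C$ for all $t$ bounded away from $0$ (there $f$ has no zeros on $i\R$, so $\log|f(\pm it)|$ is bounded on compact $t$-ranges). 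Because the kernel $\tfrac1{t^2}-\tfrac1{R^2}$ is nonnegative on $(0,R]$, this pointwise estimate integrates directly, the contribution of a fixed neighbourhood of the origin being $O(1)$, and the elementary computation $\tfrac1{2\pi}\int\big(\tfrac1{t^2}-\tfrac1{R^2}\big)\big((2c+2\varepsilon)t+C\big)\,dt=\tfrac{c+\varepsilon}{\pi}\log R+O(1)$ finishes this step.

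Combining the two estimates through the equality~(\ref{christbaum}) gives $\cos(\gamma)(L-\varepsilon)\log R\le\tfrac{c+\varepsilon}{\pi}\log R+O(1)$; dividing by $\log R$ and letting $R\to\infty$ along radii free of zeros of $f$ on $\{|z|=R,\ \Reee(z)\ge0\}$, then letting $\varepsilon\to0$, yields $\cos(\gamma)L\le c/\pi$, which is the claim. I expect the second step to be the main obstacle: one has to check that both $R$-dependent terms on the right contribute only through a single $\log R$-term with exactly the coefficient $c/\pi$, all remaining contributions being $O(1)$. Two features make this possible and are worth isolating: the Carleman kernels are nonnegative, so that one-sided pointwise growth bounds for $\log|f|$ integrate to the required one-sided bounds \emph{without} any lower bound on $\log|f|$ being needed, and the arc integral is damped by the factor $1/R$, which renders it harmless.
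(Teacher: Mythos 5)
Your argument is correct and follows essentially the same route as the paper's proof: lower-bound the Carleman sum over the subsequence $\lambda$ by $\cos(\gamma)(\ldens(\lambda)-\varepsilon)\log R$, bound the arc integral by $O(1)$ via $\log M_f(R)=O(R)$, bound the imaginary-axis integral by $\frac{c+\varepsilon}{\pi}\log R+O(1)$ via the indicator values $h_f(\pm\pi/2)$, and compare coefficients of $\log R$. The only differences are cosmetic: you add an explicit reduction (by a small real translation) to the case of no zeros on $i\R$, which the paper leaves implicit, and your integration by parts makes the boundary term vanish exactly instead of invoking $n(R)=O(R)$.
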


\begin{proof}
We set $n(R):=\#\{\lambda_n\in\lambda:|\lambda_n|\leq R\}$ and assume that $\lambda$ are the only zeros of $f$ in the right half-plane. Otherwise, the effect which leads to the  assertion is strengthened. Since $f$ is of exponential type, we have
\begin{align}\label{weihnachtsbaum}
n(R)=O(R)
\end{align}
(cf. \cite[Theorem 2.5.13]{boas}). Integration by parts yields
\[
\sum_{r_n\leq R} \frac{1}{r_n}=\int_0^R \frac{1}{t}\, dn(t)=\int_{0}^R \frac{n(t)}{t^2}\,dt+\frac{n(R)}{R}
\]
and hence, considering (\ref{weihnachtsbaum}), we obtain that
\begin{align}\label{weihnachtsbaum1}
 \sum_{r_n\leq R} \left(\frac{1}{r_n}-\frac{r_n}{R^2}\right)\,\cos(\Theta_n)\geq \cos(\gamma)(\ldens(\lambda)-\varepsilon_1)\log(R)\,+\,O(1)
\end{align}
for all $\varepsilon_1>0$. From $\log M_f(R)=O(R)$ it immediately follows that
\begin{align}
 \frac{1}{\pi R}\int_{-\pi/2}^{\pi/2}\log|f(Re^{i\Theta})|\,\cos(\Theta)\, d\Theta=O(1)
\end{align}
and, by the definition of $c$,
\begin{align}\label{weihnachtsbaum2}
 \frac{1}{2\pi}\int_0^R\left(\frac{1}{t^2}-\frac{1}{R^2}\right)\,\log|f(it)f(-it)|\,dt\leq \frac{c+\varepsilon_2}{\pi}\,\log(R)\,+\, O(1)
\end{align}
for all $\varepsilon_2>0$.
Now, inserting (\ref{weihnachtsbaum}), (\ref{weihnachtsbaum1}) and (\ref{weihnachtsbaum2}) in (\ref{christbaum}) leads to
\[
(\ldens(\lambda)-\varepsilon_1)\log(R)\leq \frac{c+\varepsilon_2}{\pi\,\cos(\gamma)}\,\log(R)\,+\, O(1) \ \ \textnormal{for all }\varepsilon_1,\varepsilon_2>0,
\] 
and this implies the assertion.
\end{proof}

\begin{proof}[Proof of Theorem \ref{horizontal}]
Suppose that $f\in \FHC(T_1,H(\C))$ is of exponential type with $K(f)=[v,w]$. Then we can find a sequence $(\lambda_n)$ of positive integers that has positive lower density and such that 
\begin{align*}
\sup\limits_{|z|\leq \frac{1}{2}} |f(z+\lambda_n)-z|<\frac{1}{2}
\end{align*}
for all $n\in\N$. Rouché's theorem implies that $f$ has a zero $\tilde{\lambda}_n$ in $\{z:|z-\lambda_n|<\frac{1}{2}\}$ for every positive integer $n$. Obviously, also $\ldens((\tilde{\lambda}_n))=\ldens((\lambda_n))>0$.\\ 
Without loss of generality, let $K(f)=\{re^{i\Theta}:r\in [0,a]\}$ with $0\leq |\Theta|<\frac{\pi}{2}$ and some $a\in[0,\infty)$. 
Otherwise, consider $e_\alpha f$ with an appropriate $\alpha\in\C$ such that $K(e_\alpha f)=\alpha +K(f)$ has the desired position.
The rotation of the arguments of $f$ by an angle of $-\Theta$ causes the rotation of the conjugate indicator diagram by the same angle. This can be easily deduced from the connection between the indicator function and the conjugate indicator diagram. That means, for $g(z):=f(e^{-i\Theta}z)$ we have $K(g)=e^{-i\Theta} K(f)$. Thus $K(g)$ is a horizontal line segment and consequently, 
\begin{align}\label{oemchen}
\max\{|\Imm(z)-\Imm(w)|:z,w\in K(g)\}=0. 
\end{align}
Further, $(e^{i\Theta}\tilde{\lambda}_n)$ is a sequence of zeros of $g$.
Since $|\Theta|<\frac{\pi}{2}$, we can find some $\gamma\in (|\Theta|,\frac{\pi}{2})$ such that $(e^{i\Theta}\tilde{\lambda}_n)_{n\geq n_0}$ is contained in the sector $\{z:\arg(z)<\gamma\}$ for sufficiently large $n_0$. According to the lower density of $(\tilde{\lambda}_n)$ and (\ref{oemchen}), this is a contradiction to Lemma \ref{nullenlemma}.
\end{proof}

\begin{proof}[Proof of Theorem \ref{einpunktig}]
Assuming the contrary, we suppose that $f\in \FHC(T_1,H(\C))$ is a function of exponential type such that $K(f)=K$. 
Then there exists a sequence $(\lambda_n)$ of positive integers with $\ldens((\lambda_n))>0$ and such that 
\begin{align}\label{stubbis}
\max_{|z|\leq \frac{1}{2}} |f(z+\lambda_n)-z|<\frac{1}{4}.
\end{align} 
Let $K_1:=K(f)\cap\{z:\Reee(z)\geq -\varepsilon\}$ with $\varepsilon>0$ so small that 
\begin{align}\label{jonnyg}
\max\{|\Imm(z)-\Imm(w)|: z,w\in K_1\}<2\pi \,\ldens((\lambda_n)).
\end{align}
This is possible since the intersection of $K(f)$ with the imaginary axis is a singleton. Due to Aronszajn's theorem (cf. \cite{aron}), we can decompose $\Bo f$ into the sum 
\[
H_1+H_2=\Bo f
\]
where $H_1\in H_0(\C\setminus K_1)$ and $H_2\in H_0(\C\setminus K_2)$ with $K_2:=\overline{K(f)\setminus K_1}$. We set $h_1=\Bo^{-1}_{K_1}(H_1)$ and $h_2=\Bo^{-1}_{K_2}(H_2)$. The set $K_2$ is strictly contained in the left half-plane and thus there are some $\tau>0$ and $C>0$ such that 
\[
|h_2(z)|< C e^{-\tau|z|}
\]
in some sector $\{z:|\arg(z)\leq \varphi\}$ with $\varphi>0$ sufficiently small. Now, (\ref{stubbis}) yields
\begin{align}\label{stubbistubant}
\frac{1}{4}&>\max_{|z|\leq \frac{1}{2} } |f(z+\lambda_n)-z|=\max_{|z|\leq \frac{1}{2} } |h_1(z+\lambda_n)+h_2(z+\lambda_n)-z|\notag\\
&>\max_{|z|\leq \frac{1}{2} } |h_1(z+\lambda_n)-z|-Ce^{-\tau (\lambda_n-\frac{1}{2})}
\end{align}
for all $n\in\N$. Since $Ce^{-\tau (\lambda_n-\frac{1}{2})}\rightarrow 0$ as $n$ tends to infinity, (\ref{stubbistubant}) implies
\[
\max\limits_{|z|\leq \frac{1}{2} }|h_1(z+\lambda_n)- z|<\frac{1}{2}
\]
for $n$ larger than some $n_0\in\N$. Then, by Rouché's theorem, $h_1$ has a zero $\tilde{\lambda}_n$ in $\{z:|z-\lambda_n|\leq \frac{1}{2}\}$ for all $n>n_0$. The sequence $(\tilde{\lambda}_n)_{n>n_0}$ has the same lower density as $(\lambda_n)$. By (\ref{jonnyg}), the conjugate indicator diagram $K(h_1)$ has an extension in the direction of the imaginary axis less than $2\pi\, \ldens((\lambda_n))$, and for all $\gamma>0$ the sequence $(\tilde{\lambda}_n)_{n>n_0}$ is contained in the sector $\{z:|\arg(z)|<\gamma\}$ if $n_0$ is sufficiently large. This contradicts Lemma \ref{nullenlemma}.
\end{proof}

Now, by means of the shown results, we can proceed with the proofs of the Theorems of the first section.
\begin{proof}[Proof of Theorem \ref{reelklein}]
By (\ref{ealphaind}), $\{Pe_\alpha: P\textnormal{ polynomial }\}\subset \Exp(K)$ for every $\alpha\in K$.
We show that, for each $\alpha\in K$,  $\{Pe_\alpha: P\textnormal{ polynomial }\}$ is dense in $\Exp(K)$.
Let $\Sigma$ denote the space of all polynomials.\\
In a first case we assume that $0\in K$. 
Then $\Sigma$ is dense in $H(\C_\infty\setminus K^{-1})$ by Runge's theorem. Observing that $\TBo^{-1}(\Sigma)=\Sigma$, this shows that $\Sigma$ is dense in $\Exp(K)$ since $\TBo$ is an isomorphism.\\
Let $K$ be an arbitrary compact and convex set. Considering (\ref{ealphaind}), we have $g=f/e_\alpha\in \Exp(K-\{\alpha\})$ for arbitrary $f\in \Exp(K)$ and $\alpha\in K$. Further,   
\begin{align*}
 ||f||_{K,n}&=\sup_{z\in\C} |g(z)||e^{\alpha z}|e^{-H_K(z)-\frac{1}{n}|z|}\\
 &=\sup_{z\in\C} |g(z)|e^{-H_K(z)-H_{\{-\alpha\}}(z)-\frac{1}{n}|z|} \\&=\sup_{z\in\C} |g(z)|e^{-H_{K-\{\alpha\}}(z)-\frac{1}{n}|z|}\\
 &=||g||_{K-\{\alpha\},n}
\end{align*}
shows that $f\mapsto f/e_\alpha$ is an isometric isomorphism from $\Exp(K)$ to $\Exp(K-\{\alpha\})$. With the first part, this implies the assertion.
Now, we can conclude that for two compact, convex sets $K\subset L$, the space $\Exp(K)$ is continuously and densely embedded in $\Exp(L)$ and thus $\FHC(T_1,\Exp(K))\subset \FHC(T_1,\Exp(L))$.\\
Let $ia,ib\in K$ with real numbers $a<b$.
According to the above and taking into account that the convexity of $K$ implies $[ia,ib]\subset K$, it is sufficient to consider $\Exp([ia,ib])$. 
We choose $0<d<b-a$.
Since $q$ tends to infinity, we can find an increasing sequence $(k_l)_{l\in\N}$ of positive integers such that condition (\ref{ildotti}) holds. Now, Theorem \ref{reelkleinsatz} provides a function $g\in\Exp([-id,id])$ that is frequently universal for $(T_{1}^{k_l})_{l\in\N}$ on $\Exp([-id,id])$ and such that $|g(x)|=O(q(|x|))$ on $\R$. In particular, this implies that $g\in\HC(T_1,\Exp([-id,id]))$. We choose a suitable $\beta\in \Q$ such that $f:=e_{2\pi i \beta}\,g\in \Exp([ia,ib])$ (cf. (\ref{ealphaind})). Note that $f$ satisfies $
|f(x)|=O(q(|x|))$
on the real axis. There exists an $m\in\N$ such that $\beta\,k\,m\in \N$, and hence $e_{2\pi i \beta}(z+k\,m)=e^{2\pi i\beta(z+k\, m)}=e_{2\pi i \beta}(z)$ for all $k\in \N$. We obtain that 
\[(T_{1}^m)^k f =e_{2\pi i\beta}\, (T_{1}^m)^k g
\]
holds for all $k\in \N$ and, since $g\in \HC(T_{1}^m,\Exp([-id,id]))$ due to Ansari's theorem (cf. \cite[Theorem 3.1]{baymathbook} and \cite{ansari}), one easily verifies that $f$ is the desired function.
\end{proof}

\begin{proof}[Proof of Theorem \ref{frequab}]
As in the proof of Theorem \ref{reelklein}, we can consider $\Exp([ia,ib])$ with $ia,ib\in K$ and real numbers $a<b$.
With $q(r):=1+r^c$, $r\geq 0$, condition (\ref{ildotti}) is satisfied for the whole sequence $(k_l)_{l\in\N}=(k)_{k\in\N}$. Consequently, for arbitrary $d>0$, Theorem \ref{reelkleinsatz} provides a function $g\in\FHC(T_1, \Exp([-id,id]))$ satisfying the desired growth condition on $\R$. Now, for each $0<d<b-a$ and a suitable choice of $\beta\in\Q$ we have that $f:=e_{2\pi i \beta}\,g\in\Exp([ia,ib])$ (cf. (\ref{ealphaind})). Taking into account that Ansari's theorem still holds for frequent hypercyclicity (cf. \cite[Theorem 6.30]{baymathbook}), a similar reasoning as in the proof of Theorem \ref{reelklein} shows that $f\in \FHC(T_1,\Exp([ia,ib]))$. 
\end{proof}

\begin{proof}[Proof of Theorem \ref{satzeins}]
In case that $[v,w]$ is a non-singleton line segment of the imaginary axis, Theorem \ref{frequab} yields the assertion.
By the reasoning in Remark \ref{bemerkungHC}, we have $\HC(T_1,\Exp([v,w]))=\emptyset$ and hence $\FHC(T_1,\Exp([v,w]))=\emptyset$
if $[v,w]$ is strictly contained in the left or right half-plane. In the remaining case that $[v,w]$ is a singleton or non-vertical line segment intersecting the imaginary axis, we obtain $\FHC(T_1,\Exp([v,w]))=\emptyset$ by Theorem \ref{horizontal}.
\end{proof}

\bibliographystyle{plain}                      
\bibliography{references}

\end{document}